\theoremstyle{plain}
\newtheorem*{theorem*}{Theorem}
\newtheorem*{lemma*} {Lemma}
\newtheorem*{corollary*} {Corollary}
\newtheorem*{proposition*}{Proposition}
\newtheorem*{conjecture*}{Conjecture}
\newtheorem{theorem}{Theorem}[section]
\newtheorem{lemma}[theorem]{Lemma}
\newtheorem*{theorem1*}{Theorem 1}
\newtheorem*{theorem2*}{Theorem 2}
\newtheorem*{theorem3*}{Theorem 3}
\newtheorem{corollary}[theorem]{Corollary}
\newtheorem{proposition}[theorem]{Proposition}
\theoremstyle{remark}
\newtheorem{example*}{Example}
\newtheorem*{claim}{Claim}
\theoremstyle{definition}
\def\op{\operatorname}
\def\G{\Gamma}
\def\coker{\op{coker}}
 \def\Q{\Bbb{Q}}  \def\Z{\Bbb{Z}} \def\R{\Bbb{R}} 
\def\N{\Bbb{N}}    
 \def\a{\alpha}   \def\bp{\begin{pmatrix}}
\def\sm{\setminus} \def\ep{\end{pmatrix}} \def\bn{\begin{enumerate}} 
   \def\en{\end{enumerate}}
\def\ba{\begin{array}} \def\ea{\end{array}}  
 \def\S{\Sigma}  \def\a{\alpha} \def\b{\beta} \def\wti{\wtilde}
\def\id{\op{id}}   
\def\ker{\op{ker}}\def\be{\begin{equation}} \def\ee{\end{equation}} 
 \def\hom{\op{Hom}}  
 \def\dim{\op{dim}}
\def\ol{\overline}
\def\wti{\widetilde}
\def\what{\widehat}
\def\ti{\tilde}
\def\eps{\epsilon}
\def\F{\mathbb{F}}
\def\CC{\mathcal{C}}
\begin{document}
\title{The virtual Thurston seminorm of 3-manifolds}
\author{Michel Boileau}
\address{
Aix Marseille Univ, CNRS, Centrale Marseille, I2M, Marseille, France}
\email{michel.boileau@univ-amu.fr}

\author{Stefan Friedl}
\address{Fakult\"at f\"ur Mathematik\\ Universit\"at Regensburg\\   Germany}
\email{sfriedl@gmail.com}

\begin{abstract}
We show that the Thurston seminorms of all finite  covers of an aspherical 3-manifold determine whether it is  a graph manifold, a mixed 3-manifold or hyperbolic. 
\end{abstract}

\keywords{3-manifolds, Thurston norm, geometric structures on 3-manifolds}  \subjclass[2010]{57M05, 57M10, 57M27}

\maketitle

\section{Introduction}
Let $N$ be a 3-manifold. (Here, and throughout the paper all 3-manifolds are understood to be compact, orientable, connected, aspherical and with empty or toroidal boundary.)
Given a surface $\S$ with connected components $\S_1,\dots,\S_k$  its complexity
is defined to be
\[ \chi_-(\S):=\sum_{i=1}^d \max\{-\chi(\S_i),0\}.\]
Given a 3-manifold $N$ and $\phi \in H^1(N;\Z)$ the Thurston norm is defined as
\[ x_N(\phi):= \min\{\chi_-(\S)\, |\, \S \subset N\mbox{ is a properly embedded surface, dual to }\phi\}.\]
 Thurston
\cite{Th86} showed that  $x_N$ is a seminorm on $H^1(N;\Z)$. It follows from standard arguments that $x_N$ extends to a seminorm on $H^1(N;\R)$.
If $N$ is hyperbolic, then $N$ is in particular atoroidal which implies easily that $x_N$ is a norm. On the other hand, the seminorm is degenerate whenever there is a non-separating torus, e.g.\ if $N=S^1\times \Sigma$ where $\Sigma$ is a surface of genus $g\geq 1$.
Given any seminorm $x$ on a vector space $V$ the set $\{v\in V\,|\,x(v)=0\}$ is a subspace that we refer to as the \emph{kernel $\ker(x)$ of $x$}.

In this paper we study to which degree the Thurston norm of all finite covers of a 3-manifold determines the type of the JSJ-decomposition of the 3-manifold. Hereby we distinguish the following three mutually exclusive types of JSJ-decompositions a prime 3-manifold $N$ can have:
\bn
\item The 3-manifold $N$ is hyperbolic.
\item The 3-manifold $N$ is a graph manifold, i.e.\ all its JSJ-components are Seifert fibered spaces.
\item Following \cite{PW12} we say that $N$ is \emph{mixed} if it is if the JSJ-decomposition is non-trivial and if it contains at least one hyperbolic JSJ-component.
\en
This question is related to the general study of properties or invariants of a 3-manifold  that can be determined from its finite covers, see for example \cite{BF15},  \cite{BR15}, \cite{Le14}
 \cite{Wil16}, \cite{WZ17}.

In order to state our first result we introduce a few more definitions.
Given a $3$-manifold $N$ we write 
\begin{align*}
 \ba{rcl}\hspace{4cm} b_1(N)&:=&\dim_{\R}(H_1(N;\R)),\\
k(N)&:=& \dim_{\R}(\ker(x_N)),\\
r(N)&:=&\left\{\ba{ll} 0,&\mbox{ if }b_1(N)=0, \\
\frac{k(N)}{b_1(N)},&\mbox{ if }b_1(N)>0.\ea\right.
\intertext{Furthermore we write}
\CC(N)&:=&\mbox{the class of all finite regular covers $\ti{N}$ of $N$}, \\
\intertext{and}
\what{r}(N) &:=&\displaystyle \sup_{\wti{N}\in \CC(N)} r(\wti{N})
\ea
\end{align*}

The following proposition is well-known to the experts.

\begin{proposition}\label{prop:detectshyperbolic}
Let $N$ be an aspherical $3$-manifold with empty or toroidal boundary. Then $N$ is hyperbolic if and only if $\what{r}(N)=0$. 
\end{proposition}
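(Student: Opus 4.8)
The plan is to prove the two directions of the equivalence separately, reducing the harder one to geometrization. If $N$ is hyperbolic, then every finite cover $\ti N$ of $N$ is again a finite-volume hyperbolic $3$-manifold, hence atoroidal, so $x_{\ti N}$ is a norm (as recalled in the introduction) and $k(\ti N)=0$; thus $r(\ti N)=0$, also when $b_1(\ti N)=0$ by definition. Taking the supremum over $\CC(N)$ gives $\what{r}(N)=0$.

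For the converse I argue contrapositively: assuming $N$ is not hyperbolic, I will show $\what{r}(N)>0$. First a reduction: it suffices to find \emph{some} finite, not necessarily regular, cover $\ti N_0\to N$ carrying a non-zero class $\phi_0\in H^1(\ti N_0;\Z)$ with $x_{\ti N_0}(\phi_0)=0$. Indeed, let $\ti N\to N$ be the cover associated with the normal core of $\pi_1(\ti N_0)$ in $\pi_1(N)$; it is finite and regular, and it factors through a finite cover $q\co\ti N\to\ti N_0$. Since $q$ is a finite cover, $q^{*}\co H^1(\ti N_0;\Q)\to H^1(\ti N;\Q)$ is injective (transfer), so $q^{*}\phi_0\neq 0$; and if $\S_0\subset\ti N_0$ is a properly embedded surface dual to $\phi_0$ with $\chi_-(\S_0)=0$, then $q^{-1}(\S_0)$ is a properly embedded surface dual to $q^{*}\phi_0$ with $\chi_-(q^{-1}(\S_0))=\deg(q)\,\chi_-(\S_0)=0$. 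Hence $0\neq q^{*}\phi_0\in\ker(x_{\ti N})$, so $b_1(\ti N)>0$, $k(\ti N)\geq 1$, $r(\ti N)>0$ and $\what{r}(N)\geq r(\ti N)>0$. In particular it is enough to exhibit, in some finite cover of $N$, a non-separating two-sided torus, annulus, or disc $F$: then $\chi_-(F)=0$, while the Poincar\'e dual of $F$ is non-zero in $H^1$ because $F$ pairs non-trivially with a class in $H_1$.

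Now I invoke geometrization: an aspherical $3$-manifold with empty or toroidal boundary that is not hyperbolic is Seifert fibered or has non-trivial JSJ decomposition. Suppose $N$ is Seifert fibered. If the Seifert base orbifold has finite or infinite cyclic orbifold fundamental group, then $N$ is one of a few small manifolds, such as $N=S^1\times D^2$ or $N=T^2\times I$, in which $x_N$ is already degenerate (a meridian disc, respectively a vertical annulus, being a degenerate class). Otherwise the base orbifold is finitely covered by a closed surface $\S$ of genus at least $1$, giving a finite cover $\ti N_0$ that is an $S^1$-bundle $p\co\ti N_0\to\S$; then, for a non-separating simple closed curve $\g\subset\S$, the vertical torus $T=p^{-1}(\g)$ is non-separating in $\ti N_0$, because its Poincar\'e dual equals $p^{*}$ of the Poincar\'e dual of $\g$ and $p^{*}\co H^1(\S;\Z)\to H^1(\ti N_0;\Z)$ is injective by the Gysin sequence of a circle bundle with connected fibre. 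Suppose instead that $N$ has non-trivial JSJ decomposition, so that $N$ contains an essential torus. If some essential torus of $N$ is non-separating — for instance a JSJ torus lying on a cycle of the JSJ graph of $N$ — take $\ti N_0=N$. Otherwise I appeal to the well-known fact that a $3$-manifold with non-trivial JSJ decomposition has a finite cover $\ti N_0$ containing a non-separating essential torus: using that the boundary $\Z^2$-subgroups of the JSJ pieces (which are hyperbolic or Seifert fibered) are separable in the respective fundamental groups, one assembles compatible finite covers of the pieces, glued along two or more elevations of a single JSJ torus, so that the JSJ graph of the resulting cover of $N$ acquires a cycle; a JSJ torus on that cycle is non-separating.

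I expect the genuine content to be concentrated in this last step — turning a separating JSJ torus into a non-separating one in a finite cover. For graph manifolds this is classical and essentially combinatorial; for mixed manifolds it relies on separability of boundary $\Z^2$-subgroups in cusped hyperbolic $3$-manifold groups, a consequence of the work of Wise and Agol. The other ingredients — the hyperbolic direction, the transfer reduction, and the circle-bundle description of Seifert fibered spaces — are either formal or a direct application of standard $3$-manifold structure theory.
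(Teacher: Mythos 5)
Your proof is correct and follows the same route as the paper: the hyperbolic direction is immediate from atoroidality of all finite covers, and the converse is the standard construction of a finite regular cover containing a homologically essential torus (or annulus/disc), which the paper simply cites from \cite[(C.10)--(C.15)]{AFW15} and you spell out. The only quibble is that in the Seifert fibered case a base orbifold with boundary is covered by a surface with boundary rather than a closed surface, but there a vertical annulus over a non-separating arc serves the same purpose, so the argument goes through.
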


\begin{proof} If $N$ is hyperbolic, then all its finite covers are hyperbolic, and as we pointed out above, in this case the seminorm is always a norm. On the other hand, if $N$ is not hyperbolic and aspherical, then by standard arguments, see e.g.\ \cite[(C.10)-(C.15)]{AFW15} there exists a finite regular cover $\wti{N}$ with a homologically essential torus. In particular $r(\wti{N})>0$. 
\end{proof}

It is harder to distinguish graph manifolds from manifolds with a non-trivial JSJ-decomposition that contain at least one hyperbolic JSJ-component. 
In order to distinguish these two classes of 3-manifolds, we need to consider a wider class of finite coverings, which we call subregular, since they correspond to subnormal subgroups of the fundamental groups. We say that a covering $f\colon \what{N}\to N$ is \emph{subregular} if the covering $f$ can be written as a composition of coverings $f_i\colon N_i\to N_{i-1}$, $i=1,\dots,k$ with $N_k=\what{N}$ and $N_0=N$, such that each $f_i$ is regular.

For a $3$-manifold $N$ we define:
\[
\ba{rcl}
\CC^{sub}(N)&:=&\mbox{the class of all finite subregular covers $\what{N}$ of $N$,}\\
\rho(N)&:=&\displaystyle\inf_{\what{N}\in \CC^{sub}(N)}r(\what{N}).\\
\what{\rho}(N)&:=&\displaystyle  \sup_{\wti{N}\in \CC(N)} \rho(\wti{N}).\ea
\] \
The following is the main result  of this paper. It characterizes graph manifolds $N$ in term of the invariant $\what{\rho}(N)$. It also gives a characterization of manifolds with non vanishing simplicial volume (i.e. with at least one hyperbolic JSJ-component). This characterization is  analogous to the one for hyperbolic manifolds in Proposition \ref{prop:detectshyperbolic}, but this time we use  the invariant $\rho(N)$ instead of $r(N)$.

\begin{theorem}\label{mainthm}
Let $N$ be an aspherical $3$-manifold with empty or toroidal boundary. 
\bn
\item If $N$ is a graph manifold, then $\what{\rho}(N)=1$. 
\item If $N$ is not a graph manifold, i.e.\  if $N$  admits a hyperbolic piece in its JSJ-decomposition, then $\what{\rho}(N)=0$. 
\en
\end{theorem}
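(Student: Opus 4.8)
The plan is to treat the two statements separately, each time exploiting the virtual behavior of the Thurston norm under the JSJ-decomposition together with Agol's virtual fibering / virtual RFRS machinery and Wise's results on virtual specialness.

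\emph{Part (1): $N$ a graph manifold $\Rightarrow \what\rho(N)=1$.} First I would record that $\rho(N)\leq r(N)\leq 1$ always, so $\what\rho(N)\leq 1$ and only the reverse inequality $\what\rho(N)\geq 1$ needs work; in fact it suffices to exhibit, for every finite regular cover $\ti N$ of $N$, the estimate $\rho(\ti N)\geq 1$, i.e.\ $r(\what N)\geq 1$ for \emph{every} finite subregular cover $\what N$ of $\ti N$. Since a finite cover of a graph manifold is again a graph manifold, this reduces to the single assertion: if $M$ is a graph manifold then $r(M)=1$, i.e.\ the Thurston seminorm of any graph manifold is identically zero on $H^1(M;\R)$. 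This is classical: each JSJ-piece of a graph manifold is Seifert fibered over an orbifold, the Thurston norm of a Seifert fibered manifold vanishes (a vertical or well-chosen horizontal surface realizing any class has non-negative Euler characteristic, cf.\ Thurston \cite{Th86} and \cite{EN85}), and the norm of $M$ is controlled piecewise by the norms of the JSJ-pieces plus the gluing tori, which contribute zero to complexity; one concludes $x_M\equiv 0$. Hence $k(M)=b_1(M)$ and $r(M)=1$ whenever $b_1(M)>0$. The only subtlety is the case $b_1(M)=0$, where $r(M)=0$ by definition; here I would pass first to a finite regular cover $\ti N$ with $b_1(\ti N)>0$ (such a cover exists because graph manifolds have virtually positive first Betti number, e.g.\ \cite[(C.10)--(C.15)]{AFW15}), so that the supremum defining $\what\rho(N)$ sees only pieces with positive first Betti number, giving $\what\rho(N)=1$.

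\emph{Part (2): $N$ has a hyperbolic JSJ-piece $\Rightarrow \what\rho(N)=0$.} Since $\what\rho(N)\geq 0$ trivially, I must show: for every finite regular cover $\ti N$ of $N$, one has $\rho(\ti N)=0$, i.e.\ $\ti N$ admits finite subregular covers $\what N$ with $r(\what N)$ arbitrarily small. The key input is that a 3-manifold with a hyperbolic JSJ-piece is virtually special in the sense of Wise (Przytycki--Wise \cite{PW12}, Agol \cite{Ag13}), hence virtually RFRS; passing to such a cover (which is subregular, being built from regular covers) we may assume $\ti N$ is RFRS and still has a hyperbolic JSJ-piece. By Agol's theorem $\ti N$ then virtually fibers, and more precisely, for a fibered class $\phi$ the fibered cone structure of the Thurston norm forces $x_{\ti N}$ to be a genuine \emph{norm} on the span of the fibered faces; the crucial point is to make the \emph{kernel} small. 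Here I would argue that because at least one JSJ-piece is hyperbolic, that piece is atoroidal, so after passing to a further finite cover the restriction of the Thurston norm to classes supported on (a cover of) that piece is nondegenerate, and one can arrange — using separability of the peripheral subgroups and the fact that in a cover every class can be "spread out" — that $k(\what N)/b_1(\what N)\to 0$. Concretely: by taking covers dual to a homology class that is nontrivial on the hyperbolic piece and using that finite covers of hyperbolic pieces have their Betti numbers growing while the kernel stays controlled by the number of vertical (Seifert) directions, the ratio $r(\what N)$ is driven to $0$.

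\textbf{Main obstacle.} The hard part is Part (2), specifically quantifying the kernel: one needs that in a suitable tower of subregular covers the dimension of $\ker(x_{\what N})$ grows \emph{strictly slower} than $b_1(\what N)$. The degenerate classes come precisely from tori (non-separating tori, in particular the Seifert fibers of graph-manifold pieces and the JSJ-tori), and one must show that the hyperbolic piece "contributes" a definite positive proportion of $b_1$ on which the norm is a genuine norm, uniformly down a tower — this requires combining the virtual fibering of the hyperbolic piece (Agol, Wise) with a Mayer--Vietoris / JSJ bookkeeping of how $H^1$ and the seminorm kernel decompose over the graph of pieces, and controlling the peripheral gluing via subgroup separability. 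Getting the limit to be exactly $0$ (not merely $<1$) is where the subregular — as opposed to merely regular — covers are essential, since one iterates the construction.
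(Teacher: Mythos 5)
Your Part (1) rests on a false claim: the Thurston seminorm of a graph manifold is \emph{not} identically zero, and $r(M)=1$ fails for most graph manifolds. Already for $M=S^1\times \Sigma$ with $\chi(\Sigma)<0$ (a Seifert fibered space) the class dual to $\{\mathrm{pt}\}\times\Sigma$ has norm $-\chi(\Sigma)>0$; here $k(M)=b_1(\Sigma)$ while $b_1(M)=b_1(\Sigma)+1$, so $r(M)=b_1(\Sigma)/(b_1(\Sigma)+1)<1$. More generally, for a graph manifold of product type the Eisenbud--Neumann formula gives $x(\phi)=\sum_v|\phi(f_v)|\,\chi_-(\Sigma_v)$, so the kernel has codimension up to the number $|V|$ of JSJ-pieces. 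Consequently $\rho(\ti N)=1$ is not automatic, and the content of Part (1) is precisely the asymptotic statement you skip: one must produce a regular cover $\what N$ such that \emph{every} further cover $\ol N$ satisfies $r(\ol N)>1-\eps$. The paper does this by passing to a product-type cover in which each piece $N_v=S^1\times\Sigma_v$ has $\dim\operatorname{coker}\{H_1(\partial N_v;\R)\to H_1(N_v;\R)\}>1/\eps$ (a condition that persists under further covers), so that $b_1(\ol N)>|V|/\eps$ while $b_1(\ol N)-k(\ol N)\leq |V|$. Your reduction ``$r(M)=1$ for all graph manifolds, hence done'' does not survive this correction.

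For Part (2) you correctly identify the difficulty but do not resolve it; what you offer is a plan whose decisive step (``the hyperbolic piece contributes a definite positive proportion of $b_1$ on which the norm is nondegenerate, uniformly down a tower'') is exactly the theorem to be proved. The paper's mechanism, which is absent from your sketch, is to construct on a suitable regular cover a single class $\phi$ that is \emph{fibered} on the complement of certain hyperbolic JSJ-pieces but \emph{large} (factoring through a non-cyclic free group) on those pieces. The first property forces $b_1$ of the induced $\Z_p$-cover of the fibered part to stay bounded (by $3+x(\phi)$, via the monodromy description of $H_1$); the second forces $c$ of the covers of the hyperbolic pieces to grow like $p$; Mayer--Vietoris plus nondegeneracy of the norm on hyperbolic pieces then drives $r$ to $0$. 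Producing such a $\phi$ requires two nontrivial inputs you do not supply: a class that is non-fibered on a hyperbolic piece yet fibered elsewhere (obtained from the polyhedral structure of the Thurston norm ball of the hyperbolic piece), and the fact that a non-fibered class virtually becomes large (Przytycki--Wise surface subgroup separability together with Stallings' fibering criterion). Without these, the limit $\rho=0$ is not established.
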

 
The proof of Theorem~\ref{mainthm} relies on the work of Agol \cite{Ag08,Ag13}, Przytycki--Wise \cite{PW12} and Wise \cite{Wi09,Wi12a,Wi12b}.

The next corollary is a consequence of the combination of Proposition~\ref{prop:detectshyperbolic} and Theorem~\ref{mainthm}:

\begin{corollary} Let $N$ be an  aspherical $3$-manifold with empty or toroidal boundary. Then the Thurston norms of all finite subregular covers of $N$ determine into which of the following three categories $N$ falls:
\bn
\item graph manifold if and only if $\what{\rho}(N)=1$,
\item mixed manifold if and only if $\what{r}(N) >\what{\rho}(N)=0$.
\item hyperbolic manifold if and only if $\what{r}(N)=0$.
\en

\end{corollary}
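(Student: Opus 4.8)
The plan is to deduce this corollary formally from Proposition~\ref{prop:detectshyperbolic} and Theorem~\ref{mainthm}, so the work is organizational rather than geometric. The one standing fact I would use at the outset is that an aspherical $3$-manifold is irreducible, hence prime, so $N$ lies in exactly one of the three mutually exclusive and exhaustive classes in play: hyperbolic, graph manifold, or mixed. It is worth noting up front that both ``hyperbolic'' and ``mixed'' fall under the hypothesis ``not a graph manifold'' of clause~(2) of Theorem~\ref{mainthm}: a hyperbolic $N$ has trivial JSJ decomposition whose single piece, namely $N$ itself, is hyperbolic, and a mixed $N$ has nontrivial JSJ decomposition with a hyperbolic piece.

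First I would establish the three ``only if'' implications. If $N$ is hyperbolic then $\what{r}(N)=0$ by Proposition~\ref{prop:detectshyperbolic}. If $N$ is a graph manifold then $\what{\rho}(N)=1$ by Theorem~\ref{mainthm}(1). If $N$ is mixed then $N$ is not hyperbolic, so $\what{r}(N)\neq 0$ by Proposition~\ref{prop:detectshyperbolic}, hence $\what{r}(N)>0$ since $r(\wti{N})\geq 0$ for every cover $\wti{N}$; moreover $N$ is not a graph manifold, so $\what{\rho}(N)=0$ by Theorem~\ref{mainthm}(2); combining, $\what{r}(N)>\what{\rho}(N)=0$.

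Next the three ``if'' implications, where the mutual exclusivity carries the argument. If $\what{r}(N)=0$ then $N$ is hyperbolic, directly by Proposition~\ref{prop:detectshyperbolic}. If $\what{\rho}(N)=1$ then $N$ is neither hyperbolic nor mixed, because in each of those cases $N$ is not a graph manifold and Theorem~\ref{mainthm}(2) would force $\what{\rho}(N)=0\neq 1$; hence $N$ is a graph manifold. If $\what{r}(N)>\what{\rho}(N)=0$ then $\what{r}(N)\neq 0$ rules out the hyperbolic case via Proposition~\ref{prop:detectshyperbolic}, and $\what{\rho}(N)=0\neq 1$ rules out the graph manifold case by the implication just proved, so by exhaustiveness $N$ is mixed.

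What is left is the only non-formal point: to see that the Thurston norms of all finite subregular covers of $N$ genuinely determine $\what{r}(N)$ and $\what{\rho}(N)$. For this I would note that $\CC(N)\subseteq\CC^{sub}(N)$ (a finite regular cover is a one-term composition of regular covers, hence subregular), that $\what{N}\in\CC^{sub}(\wti{N})$ with $\wti{N}\in\CC(N)$ implies $\what{N}\in\CC^{sub}(N)$ since a composition of subregular covers is again subregular, and that for any cover the number $r(\wti{N})$ is a function of $b_1(\wti{N})$ and $k(\wti{N})=\dim_{\R}\ker(x_{\wti{N}})$, both of which are read off from the single seminorm $x_{\wti{N}}$. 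Consequently
\[
\what{r}(N)=\sup_{\wti{N}\in\CC(N)}r(\wti{N})\qquad\text{and}\qquad\what{\rho}(N)=\sup_{\wti{N}\in\CC(N)}\,\inf_{\what{N}\in\CC^{sub}(\wti{N})}r(\what{N})
\]
are functions of the family of seminorms $\{x_{\what{N}} : \what{N}\in\CC^{sub}(N)\}$ alone, which is what the corollary asserts. I do not expect any genuine obstacle here; the proof is bookkeeping layered on Proposition~\ref{prop:detectshyperbolic} and Theorem~\ref{mainthm}, and the only place calling for a bit of care is remembering that clause~(2) of Theorem~\ref{mainthm} covers the hyperbolic case as well as the mixed one.
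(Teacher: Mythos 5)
Your proof is correct and is exactly the intended argument: the paper offers no written proof, merely asserting the corollary as a consequence of Proposition~\ref{prop:detectshyperbolic} and Theorem~\ref{mainthm}, and your formal deduction (using that the three classes are mutually exclusive and exhaustive for prime manifolds, and that clause~(2) of the theorem also covers the hyperbolic case) is precisely the bookkeeping the authors leave to the reader.
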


\subsection*{Convention.} Unless it says specifically otherwise, all   3-manifolds are assumed to be compact, orientable, connected, and with empty or toroidal boundary.
Furthermore all surfaces are assumed to be compact and orientable. Finally,  all subsurfaces of a 3-manifold are assumed to  be properly embedded. 

\subsection*{Acknowledgment.} The authors are grateful to Aix-Marseille Universit\'e and   Universit\"at Regensburg for their hospitality. The first author was supported by ANR (projects 12-BS01-0003-01 and  12-BS01-0004-01), and the second author was supported by the SFB 1085 `Higher Invariants' at the University of Regensburg, funded by the Deutsche Forschungsgemeinschaft (DFG). Both authors  also benefited from the support and the hospitality  of  the Isaac Newton Institute for Mathematical Sciences during the programme Homology Theories in Low Dimensional Topology supported by EPSRC Grant Number EP/KO32208/1. Finally we wish to thank the referee for pointing out several minor inaccuracies.

\section{The calculation of $\rho$ for graph manifolds}
The following theorem immediately implies Theorem~\ref{mainthm}~(1).

\begin{theorem}\label{mainthma}
Let $N$ be an aspherical graph manifold. Then given any $\eps>0$ there exists a finite regular cover $\what{N}$ of $N$ such that for any finite cover $\ol{N}$ of $\what{N}$ we have $r(\ol{N})>1-\eps$. 
\end{theorem}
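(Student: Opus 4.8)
The plan is to produce, for a graph manifold $N$ and a given $\eps > 0$, a finite regular cover $\what N$ in which a large proportion of $H^1(\ol N;\R)$ lies in the kernel of the Thurston norm, and moreover this proportion is robust under passage to further finite covers $\ol N \to \what N$. The key mechanism is that the Thurston norm of a graph manifold is carried entirely by the Seifert pieces via their base orbifold Euler characteristics, so classes that restrict to multiples of the regular fiber on each Seifert piece are norm-zero. Thus the goal becomes: arrange a cover where the subspace of $H^1$ spanned by ``fiber classes'' of the JSJ pieces has codimension at most $\eps \cdot b_1$.

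First I would recall the structure of the Thurston norm on graph manifolds: if $N$ has JSJ pieces $M_1,\dots,M_s$ (each Seifert fibered, with empty or toroidal boundary), then for $\phi \in H^1(N;\R)$ one has $x_N(\phi) = \sum_i x_{M_i}(\phi|_{M_i})$ (by the additivity of the Thurston norm under gluing along incompressible tori, since the tori are norm-minimizing on both sides), and for a Seifert piece $M_i$ with regular fiber class $h_i \in H_1(M_i;\Z)$, one has $x_{M_i}(\psi) = 0$ precisely when the Poincaré dual of $\psi$ can be represented by vertical or horizontal surfaces of non-negative Euler characteristic; in particular $x_{M_i}$ vanishes on the annihilator of $h_i$, i.e.\ on classes $\psi$ with $\langle \psi, h_i\rangle = 0$, because such a class is dual to a vertical surface (a union of fibered tori and annuli). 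So $\ker(x_N) \supseteq \{\phi : \langle \phi|_{M_i}, h_i\rangle = 0 \text{ for all } i\}$, and it suffices to bound the codimension of this subspace, which is at most $s$ (the number of JSJ pieces), where $s$ is fixed.

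Next, the strategy is the standard one: enlarge $b_1$ while keeping the number of JSJ pieces (or rather the relevant codimension) controlled. Since $N$ is aspherical, each Seifert piece has a base orbifold that is very good and virtually has large genus; by passing to a characteristic finite cover I would arrange that each Seifert piece $\wti M_i$ in $\what N$ has base surface of genus at least $g$ for $g$ large, so that $b_1(\what N)$ grows like the total genus while the number of JSJ pieces grows only linearly. More precisely, taking a regular cover $\what N \to N$ corresponding to a suitable finite-index characteristic subgroup, one can ensure $b_1(\what N)/s(\what N) \to \infty$, hence $k(\what N)/b_1(\what N) \geq 1 - s(\what N)/b_1(\what N) > 1 - \eps$. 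The key point for the ``robustness'' clause is that any further finite cover $\ol N$ of $\what N$ only subdivides the Seifert pieces further and only adds more genus, so the ratio $s(\ol N)/b_1(\ol N)$ does not increase past the bound already achieved — this requires a short argument that in a finite cover of a Seifert piece with base genus $\geq g$, the number of new Seifert pieces over it times a constant stays below the growth in first Betti number (using multiplicativity of Euler characteristic and $b_1 \geq 2g$ for a closed orientable surface of genus $g$, together with the fact that covers of Seifert pieces are Seifert). I would make $\what N$ regular by intersecting with finitely many conjugates, which does not disturb these estimates.

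The main obstacle I anticipate is the bookkeeping for the robustness statement: one must show the inequality $r(\ol N) > 1 - \eps$ holds \emph{uniformly} over all finite covers $\ol N$ of $\what N$, not just for $\what N$ itself. This forces the genus bound to be established at the level of each JSJ piece (so it is inherited by covers) rather than globally, and it requires care that gluing tori do not contribute badly — but since incompressible tori contribute nothing to the Thurston norm and only finitely many new tori appear per piece in a cover (bounded in terms of the degree, but crucially the Betti number grows at least as fast), the estimate survives. A secondary subtlety is handling Seifert pieces whose base orbifold is small (a sphere with few cone points, or which are $S^1\times D^2$ or twisted $I$-bundles at the boundary) or pieces that are themselves $S^1 \times \Sigma$; these are dealt with by first passing to a cover where every piece has hyperbolic base orbifold of large genus, which is possible since $N$ is aspherical and not, say, a Sol or Nil manifold in a degenerate way — and for those low-complexity closed exceptions ($S^1\times S^2$ is excluded by asphericity, $T^3$ and other flat/Nil/Sol manifolds) one checks the statement directly, as $r=1$ already on a suitable cover.
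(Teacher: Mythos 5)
Your proposal follows essentially the same route as the paper's proof: the codimension of $\ker(x_{\ol{N}})$ is bounded by the number of JSJ pieces via the Eisenbud--Neumann formula and vertical surfaces, one first passes to a regular cover in which every piece is a product $S^1\times \Sigma_v$ whose base has large interior homology (your genus bound is exactly the paper's condition $c(N_v)>1/\eps$, since $c(S^1\times \Sigma)=2\cdot\mathrm{genus}(\Sigma)$ for $\Sigma$ with boundary), this condition is inherited by every further cover via the Euler characteristic argument, and Mayer--Vietoris gives $b_1(\ol{N})\geq \sum_v c(\ol{N}_v)$, so the ratio is controlled. The only step you assert rather than argue is the existence of that initial regular cover with all pieces products of large genus base (the paper's Proposition~\ref{prop:producttype}, which rests on the mod-$p$ injectivity result of Aschenbrenner--Friedl), but the overall strategy and estimates coincide with the paper's.
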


The proof of Theorem~\ref{mainthma} will require the remainder of this section.
Given a compact manifold $X$ we write 
\[ c(X):=\op{dim}_\R\left(\coker\{H_1(\partial X;\R)\to H_1(X;\R)\}\right).\]
On several occasions we will need the following lemma.

\begin{lemma}\label{lem:cgoesup}
Let $p\colon \wti{X}\to X$ be a finite covering of a manifold $X$. Then $c(\wti{X})\geq c(X)$.
\end{lemma}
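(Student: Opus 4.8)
The plan is to use the transfer homomorphism of the finite covering $p$. The crucial input is that, with real coefficients, the induced map $p_*\colon H_*(\wti X;\R)\to H_*(X;\R)$ is \emph{surjective}: writing $d$ for the degree of $p$, there is a transfer homomorphism $\tau\colon H_*(X;\R)\to H_*(\wti X;\R)$ with $p_*\circ\tau=d\cdot\op{id}$, and $d$ is invertible in $\R$. Before using this I would record two elementary facts. Since $X$ is a manifold with (possibly empty) boundary and $p$ is a covering, we have $\partial\wti X=p^{-1}(\partial X)$, and the restriction $p|_{\partial\wti X}\colon\partial\wti X\to\partial X$ is again a finite covering — on each connected component of $\partial X$ it is a finite covering, although its degree may vary from component to component. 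Applying the transfer argument on each component we conclude that $p_*\colon H_1(\partial\wti X;\R)\to H_1(\partial X;\R)$ is surjective as well.

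Next I would consider the commutative square
\[
\begin{array}{ccc}
H_1(\partial\wti X;\R) & \longrightarrow & H_1(\wti X;\R) \\
\downarrow & & \downarrow \\
H_1(\partial X;\R) & \longrightarrow & H_1(X;\R)
\end{array}
\]
in which both vertical maps are $p_*$, the horizontal maps are induced by the inclusion of the boundary, and commutativity holds by functoriality of induced maps applied to $p\colon(\wti X,\partial\wti X)\to(X,\partial X)$. Let $I\subseteq H_1(\wti X;\R)$ and $J\subseteq H_1(X;\R)$ be the images of the top and bottom horizontal maps, so that by definition $c(\wti X)=\dim_\R\bigl(H_1(\wti X;\R)/I\bigr)$ and $c(X)=\dim_\R\bigl(H_1(X;\R)/J\bigr)$. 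Commutativity gives $p_*(I)\subseteq J$, and since $p_*\colon H_1(\partial\wti X;\R)\to H_1(\partial X;\R)$ is surjective a short diagram chase upgrades this to $p_*(I)=J$. Hence $p_*$ descends to a well-defined linear map $H_1(\wti X;\R)/I\to H_1(X;\R)/J$, which is surjective because $p_*\colon H_1(\wti X;\R)\to H_1(X;\R)$ already is. A surjection between finite-dimensional real vector spaces cannot increase dimension, so $c(\wti X)\geq c(X)$, as claimed.

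I do not expect a genuine obstacle in this argument. The two points that require a little care are the surjectivity of $p_*$ on real homology — which is precisely the content of the transfer (averaging) argument, and which fails for coefficients in which $d$ is not invertible — and the fact that the induced boundary covering need not have constant degree over the components of $\partial X$. Neither causes trouble, since the transfer construction, and hence the surjectivity it yields, is local to each connected component.
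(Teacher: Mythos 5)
Your proof is correct and uses essentially the same mechanism as the paper: the transfer homomorphism for the finite covering, applied to the commutative square relating boundary and total homology with real coefficients. The only cosmetic difference is direction — you push the surjection $p_*$ down to a surjection of cokernels, whereas the paper lifts the injective transfer $p^*$ to an injection of cokernels — but these are two readings of the same diagram chase.
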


\begin{proof}
We consider the following commutative diagram of exact sequences
\[ 
\xymatrix@C-0.0cm{H_1(\partial \wti{X};\R)\ar[d]^{p_*}\ar[r]& H_1(\wti{X};\R)\ar[d]^{p_*}\ar[r]&\coker\{H_1(\partial \wti{X};\R)\to H_1(\wti{X};\R)\}\ar[d]^{p_*}\ar[r]&0\\
H_1(\partial {X};\R)\ar[r]& H_1({X};\R)\ar[r]&\coker\{H_1(\partial {X};\R)\to H_1({X};\R)\}\ar[r]&0.}
\]
For the left two vertical maps we also have the transfer maps $p^*$ going from the bottom to the top. These maps have the property that the compositions $p_*\circ p^*$ are multiplication by $[X:\wti{X}]$, in particular the transfer maps are injective.
Furthermore, the transfer maps give rise to a commutative diagram on the left. A straightforward diagram chase shows that the right vertical map also has a transfer map $p^*$ such that the composition $p_*\circ p^*$ is injective. 
\end{proof}

The next lemma is an immediate consequence of the K\"unneth Theorem.

\begin{lemma}\label{lem:cproduct}
For any surface $\Sigma$ we have $c(S^1\times \Sigma)=c(\Sigma)$.
\end{lemma}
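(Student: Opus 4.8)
The plan is to compute $c(S^1\times\Sigma)$ directly via the Künneth formula, keeping careful track of the boundary. First I would record that $\partial(S^1\times\Sigma)=S^1\times\partial\Sigma$, so both groups entering the definition of $c$ split as tensor products: $H_1(S^1\times\Sigma;\R)\cong H_1(S^1;\R)\otimes H_0(\Sigma;\R)\;\oplus\;H_0(S^1;\R)\otimes H_1(\Sigma;\R)$, and similarly $H_1(S^1\times\partial\Sigma;\R)\cong H_1(S^1;\R)\otimes H_0(\partial\Sigma;\R)\;\oplus\;H_0(S^1;\R)\otimes H_1(\partial\Sigma;\R)$, using that $\R$ is a field so the Künneth sequence splits and the $\tor$ terms vanish. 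Under these identifications the inclusion-induced map $H_1(\partial(S^1\times\Sigma);\R)\to H_1(S^1\times\Sigma;\R)$ is the direct sum of $\id_{H_1(S^1)}\otimes(H_0(\partial\Sigma)\to H_0(\Sigma))$ and $\id_{H_0(S^1)}\otimes(H_1(\partial\Sigma)\to H_1(\Sigma))$.

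Next I would identify each summand's cokernel. The second summand has cokernel $\coker\{H_1(\partial\Sigma;\R)\to H_1(\Sigma;\R)\}$, whose dimension is exactly $c(\Sigma)$. For the first summand, the map $H_0(\partial\Sigma;\R)\to H_0(\Sigma;\R)$ is surjective whenever $\Sigma$ has nonempty boundary (every component of $\Sigma$ with boundary is hit), so that summand contributes nothing to the cokernel; and if $\partial\Sigma=\varnothing$ then trivially $c(S^1\times\Sigma)=b_1(S^1\times\Sigma)$ and $c(\Sigma)=b_1(\Sigma)$, but the Künneth computation still gives the stated equality since then $H_1(\partial(S^1\times\Sigma);\R)=0$ and one checks $b_1(S^1\times\Sigma)-$(rank of boundary image)$=b_1(\Sigma)$ directly. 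In all cases the total cokernel has dimension $c(\Sigma)$, which is the claim.

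I do not expect any genuine obstacle here; the only point requiring a line of care is the surjectivity of $H_0(\partial\Sigma;\R)\to H_0(\Sigma;\R)$, which fails precisely for closed components of $\Sigma$ and must be handled by the case split on whether $\partial\Sigma$ is empty — but for our purposes a closed surface $\Sigma$ contributes its full $b_1$ to both sides, so the formula persists. Since the statement is claimed in the excerpt to be "an immediate consequence of the Künneth Theorem," I would keep the write-up to the two or three lines above rather than belaboring the bookkeeping.
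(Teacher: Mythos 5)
Your main computation is exactly the paper's intended argument (the paper gives no proof beyond ``immediate consequence of the K\"unneth Theorem''), and for surfaces all of whose components have nonempty boundary it is correct: naturality of the K\"unneth splitting identifies the inclusion-induced map with $\big(\id_{H_1(S^1)}\otimes (H_0(\partial\Sigma)\to H_0(\Sigma))\big)\oplus\big(\id_{H_0(S^1)}\otimes (H_1(\partial\Sigma)\to H_1(\Sigma))\big)$, the first summand is onto, and the second contributes $c(\Sigma)$.

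However, your treatment of the case $\partial\Sigma=\varnothing$ is wrong. There you assert that $b_1(S^1\times\Sigma)$ minus the rank of the boundary image equals $b_1(\Sigma)$, but K\"unneth gives $b_1(S^1\times\Sigma)=b_1(\Sigma)+1$ (the $S^1$ factor contributes $H_1(S^1)\otimes H_0(\Sigma)$), and the boundary image is zero, so $c(S^1\times\Sigma)=b_1(\Sigma)+1=c(\Sigma)+1$. In other words, the extra $\R$ coming from the first K\"unneth summand is killed only when $H_0(\partial\Sigma)\to H_0(\Sigma)$ is surjective; for a closed component nothing kills it, and the stated identity fails by exactly one per closed component. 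So the lemma as literally stated (``for any surface'') is false for closed $\Sigma$, and your attempt to paper over this (``the formula persists'') does not work. The honest fix is to restrict the statement to surfaces each of whose components has nonempty boundary; this is harmless for the paper, since the lemma is only ever applied to JSJ-pieces of product type, where $\Sigma_v$ has at least two boundary components.
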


We say that a graph manifold $N$ is of \emph{product type} if  each JSJ-component $N_v$ is a product $S^1\times \S_v$ where $\Sigma_v$ is a surface with $\chi(\S_v)<0$ and with at least two boundary components. 

\begin{proposition}\label{prop:producttype}
Let  $N$ be a graph manifold that is not a Seifert fibered space and that is not finitely covered by a torus bundle.  Let $C>0$.  Then $N$ is covered by a graph manifold $\what{N}$ of product type such that for each JSJ-component $N_v$ of $\what{N}$ we have  $c(N_v)>C$.
\end{proposition}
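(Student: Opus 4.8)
The plan is to argue in two stages. \emph{Stage 1:} replace $N$ by a finite cover which is a graph manifold of product type. \emph{Stage 2:} pass to a further, explicitly constructed cover on which $c$ of every JSJ piece exceeds $C$. For Stage 1 I would invoke the standard theory of finite covers of graph manifolds: a graph manifold which is neither a Seifert fibered space nor finitely covered by a torus bundle admits a finite cover of product type (see e.g.\ \cite{AFW15} and the references therein). This is precisely where the two exclusion hypotheses enter: they are what lets one, after passing to a suitable cover, eliminate the Seifert JSJ pieces with non‑hyperbolic base orbifold (essentially the twisted $I$‑bundles over the Klein bottle) — if this elimination process terminated in a Seifert manifold or a torus bundle, the original $N$ would be virtually Seifert or virtually a torus bundle, which the hypotheses forbid — leaving only pieces that are circle bundles over surfaces of negative Euler characteristic, and then match the (now unique) Seifert fibrations across every JSJ torus so that each piece becomes an honest product. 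Replacing $N$ by this cover, we may assume $N$ itself is of product type, with JSJ pieces $N_v=S^1\times\S_v$, $\chi(\S_v)<0$, $b_v\ge 2$ boundary circles, glued along the JSJ tori $T_e=\langle c_{v,e}\rangle\oplus\langle f_v\rangle$, where $f_v$ is the fibre of $N_v$ and $c_{v,e}$ the boundary circle of $\S_v$ meeting $T_e$; since $\chi(\S_v)<0$ the Seifert fibration of $N_v$ is unique, so $\langle f_v\rangle$ and $\langle f_w\rangle$ are non‑proportional in $\pi_1(T_e)$ for every edge $e$ from $v$ to $w$.

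For Stage 2 I first record the surface computation $c(\S)=2\genus(\S)$ for every surface $\S$ with boundary, so that $c(S^1\times\S)=2\genus(\S)$ by Lemma~\ref{lem:cproduct}. Next, for a prime $K$ larger than all the $b_v$, each $\S_v$ carries a connected cyclic degree‑$K$ cover $\S_v'\to\S_v$ in which every boundary circle of $\S_v$ maps to a generator of $\Z/K$: send $b_v-1$ of the boundary circles to $1\in\Z/K$ (and any genus generators to $0$); the remaining boundary curve then goes to $-(b_v-1)\neq 0$ since $K\nmid b_v-1$. Such a cover has $b_v$ boundary circles and Euler characteristic $K\chi(\S_v)$, whence $\genus(\S_v')=1-\tfrac12\bigl(K\chi(\S_v)+b_v\bigr)\to\infty$ as $K\to\infty$. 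Fix one such prime $K$ with $\genus(\S_v')>C/2$ for every vertex $v$.

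Now build $\what N\to N$ piece by piece: over $N_v=S^1\times\S_v$ take the cover $S^1_{(K)}\times\S_v'$, where $S^1_{(K)}\to S^1$ is the connected $K$‑fold cover and $\S_v'\to\S_v$ is as above. The crucial point is compatibility along the JSJ tori: restricted to $T_e$, the chosen cover of $N_v$ is the cover corresponding to the subgroup $K\langle c_{v,e}\rangle\oplus K\langle f_v\rangle=K\cdot\pi_1(T_e)$, which is independent of the chosen basis of $\pi_1(T_e)$; hence the covers of $T_e$ induced from the $N_v$‑side and the $N_w$‑side coincide, giving a single cover $\wti T_e\to T_e$, with exactly one copy in the boundary of each of $S^1_{(K)}\times\S_v'$ and $S^1_{(K)}\times\S_w'$ (as $c_{v,e}$ lifts to a single circle in $\S_v'$). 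Gluing the pieces $\{S^1_{(K)}\times\S_v'\}$ along the tori $\{\wti T_e\}$ in the pattern of the JSJ graph of $N$ therefore yields a connected degree‑$K^2$ cover $\what N$ of $N$. Since $S^1_{(K)}\times\S_v'\cong S^1\times\S_v'$, the manifold $\what N$ is again of product type; its pieces are large Seifert pieces (because $\chi(\S_v')=K\chi(\S_v)$ is very negative) whose fibres $\langle Kf_v\rangle$ remain non‑proportional across each $\wti T_e$ (because $\langle f_v\rangle,\langle f_w\rangle$ were non‑proportional in $\pi_1(T_e)$), so the JSJ decomposition of $\what N$ is exactly $\{S^1\times\S_v'\}$ glued along $\{\wti T_e\}$. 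For each such piece, $c(S^1\times\S_v')=2\genus(\S_v')>C$, as required.

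I expect the real work to be entirely in Stage 1: eliminating the small Seifert pieces and matching the Seifert fibrations across all JSJ tori simultaneously — the classical, somewhat technical argument behind residual finiteness of graph‑manifold groups — together with checking that the two exclusion hypotheses are exactly what makes this possible. Stage 2 is then essentially bookkeeping; its one subtle feature worth stressing is that the local covers of the pieces do \emph{not} assemble into a homomorphism on $\pi_1(N)$ (which is why I take no abelian or other global cover), yet they \emph{do} induce the same cover $K\cdot\pi_1(T_e)$ of each JSJ torus, and that alone suffices to glue them. This also sidesteps subgroup separability of $\pi_1(N)$, which can genuinely fail for graph manifolds.
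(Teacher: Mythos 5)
Your proposal is correct, and while Stage~1 coincides with the paper's first step (both invoke \cite[Section~4.3]{AF13}--type results to pass to a product--type cover), your Stage~2 is a genuinely different construction. The paper proceeds globally: it first applies \cite[Proposition~5.22]{AF13} to find a further cover $N''$ in which each piece injects on $\F_p$--homology, then takes the $\F_p$--homology cover of $N''$; the injectivity statement is exactly what guarantees that the induced cover of each piece $S^1\times\Sigma$ is the full $\F_p$--homology cover of that piece, for which one estimates $c(\what\Sigma)\geq d-\frac{d}{p}\geq p$. You instead build the cover locally, choosing over each piece a cyclic $\Z/K\times\Z/K$ cover $S^1_{(K)}\times\Sigma_v'$ and gluing; the point that makes this work --- and which you correctly isolate --- is that each local cover restricts on every JSJ torus to the \emph{characteristic} cover $K\cdot\pi_1(T_e)$, so the two sides match under the gluing homeomorphism and the pieces assemble into a connected degree--$K^2$ cover of $N$, with no homomorphism of $\pi_1(N)$ and no separability input needed. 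Your approach buys a more elementary and self--contained Stage~2 (a genus count for a cyclic surface cover replaces the homological injectivity result and the longer chain of inequalities), at the cost of having to verify the cover--gluing step, the single--component count over each $T_e$, and that the resulting tori really are the JSJ tori of $\what N$ (your fibre--non--proportionality argument does this; the paper instead just quotes that JSJ decompositions pull back under finite covers, \cite[Proposition~1.9.2 and Theorem~1.9.3]{AFW15}, which you could also cite to shorten that verification). Both arguments are complete; if you write yours up, make the standard ``gluing covers of a graph of spaces along matching covers of the edge spaces'' lemma explicit, since that is the one step you currently assert rather than prove.
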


\begin{proof}
Let  $N$ be a graph manifold that is not a Seifert fibered space and that is not finitely covered by a torus bundle. Let $C>0$.  By  \cite[Section~4.3]{AF13} (see also~\cite[(C.10)]{AFW15} and~\cite{He87}) there exists a finite cover $N'$ that is of product type.

Furthermore, by \cite[Proposition~5.22]{AF13} there exists a prime $p\geq C$ and a finite cover $N''$ of $N'$ such that for each JSJ-component $N''_v=S^1\times \S''_v$ the map $H_1(N''_v;\F_p)\to H_1(N'';\F_p)$ is injective. We denote by $\what{N}$ the finite cover of $N''$ that corresponds to the kernel of 
$\pi_1(N'')\to H_1(N'';\Z)\to H_1(N'';\F_p)$. In light of Lemma~\ref{lem:cproduct} it suffices to prove  the following claim.

\begin{claim}
Each JSJ-component of $\what{N}$ is of the form $S^1\times \Sigma$ where $\Sigma$ is a surface with $c(\Sigma)>C$.
\end{claim}

By Proposition~1.9.2 and Theorem~1.9.3 of \cite{AFW15} the JSJ-decomposition of $\what{N}$ is the pull-back of the JSJ-decomposition of $N''$. It follows from this fact and the above discussion of the chosen group homomorphism that  each JSJ-component of $\what{N}$ is the finite cover of a manifold of the form $S^1\times \Sigma$, where $\Sigma$ is a surface with at least two boundary components and with $\chi(\Sigma)<0$, and where we consider the cover corresponding to the kernel of the group homomorphism $\pi_1(S^1\times \Sigma)\to H_1(S^1\times \Sigma;\F_p)$. Note that this cover is of the form  $S^1\times \what{\Sigma}$ where  $\what{\Sigma}$ is the finite cover of $\Sigma$ corresponding to the kernel of the group homomorphism $\pi_1(\Sigma)\to H_1(\Sigma;\F_p)$.
We write $d=|H_1(\Sigma;\F_p)|$. Since $\chi(\Sigma)<0$ we have $d\geq p^2$. We make the following observations:
\bn
\item By definition of `product type' the surface $\Sigma$ has at least two boundary components. It follows that every  boundary component of $\Sigma$ has image of order precisely $p$ in $H_1(\Sigma;\F_p)$. Therefore 
\[b_0(\partial \what{\Sigma})=\frac{d}{p}\cdot b_0(\partial \Sigma).\]
\item By the multiplicativity of the Euler characteristic we have
\[ b_1(\what{\Sigma})-1=d\cdot (b_1(\Sigma)-1).\]
\item For any surface $\Sigma$ we have 
\[ b_0(\partial \Sigma)\,\,=\,\, b_1(\partial \Sigma)\,\,\leq\,\, b_1(\Sigma)+1.\]
\en
We now obtain that
\[ \ba{rcl}c(\what{\Sigma})&=&\dim_{\R}\Big( \coker\{H_1(\partial \what{\Sigma};\R)\to H_1(\what{\Sigma};\R)\}\Big)\\
&\geq & b_1(\what{\Sigma})-b_1(\partial \what{\Sigma})\\
&\geq & d(b_1(\Sigma)-1)+1-b_0(\partial \what{\Sigma})\\
&\geq & d(b_1(\Sigma)-1)+1-\frac{d}{p}(b_1(\Sigma)+1)\\
&= & d(b_1(\Sigma)-1)+1-\frac{d}{p}(b_1(\Sigma)-1)-\frac{2d}{p}\\
&\geq & -(d-\frac{d}{p})\chi(\Sigma)\\
&\geq & d-\frac{d}{p}.\ea\]
Hereby the first equality is given by definition, the following inequality is obvious, the next inequality is given by (2) and the fact that the boundary components of a surface are circles,  the following equality stems from (1) and (3), the next equality is purely algebraic, the following inequality is a consequence of $ \chi(\Sigma)=b_0(\Sigma)-b_1(\Sigma)$ and $d\geq p^2$,
 and the final inequality comes from $\chi(\Sigma)\leq -1$.

Summarizing we have shown that $c(\what{\Sigma})\geq d-\frac{d}{p}$. 
But since $d\geq p^2$ we see that the last term is at least $p\geq C$. Thus we have shown that $c(\what{\Sigma})\geq C$. 
\end{proof}

For the record we also mention the following elementary lemma.

\begin{lemma}\label{lem:regularcover}
If $f\colon \what{N}\to N$ is a finite covering of a 3-manifold, then there exists a finite regular covering $g\colon \ol{N}\to N$ that factors through $f$. 
\end{lemma}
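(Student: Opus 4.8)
The plan is to pass to fundamental groups and use the standard normal core construction. Fix basepoints and write $G:=\pi_1(N)$. A finite covering $f\colon \what{N}\to N$ corresponds to a subgroup $H:=f_*\pi_1(\what{N})\le G$ of finite index $n:=[G:H]$. The point is that, although $H$ need not be normal in $G$, it contains a canonical finite-index normal subgroup, its \emph{normal core}
\[ K\ :=\ \bigcap_{g\in G} gHg^{-1}. \]
First I would check that $K$ has finite index in $G$: the core $K$ is exactly the kernel of the permutation representation $G\to \op{Sym}(G/H)\cong S_n$ given by left multiplication on the coset space $G/H$, so $[G:K]$ divides $n!$; alternatively, $H$ has at most $n$ distinct conjugates, so $K$ is a finite intersection of finite-index subgroups. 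By construction $K$ is normal in $G$ and $K\le H$.

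Next I would let $g\colon \ol{N}\to N$ be the finite covering corresponding to the subgroup $K\le G$. Since $K$ is normal, $g$ is a regular covering; since $K\le H$, the covering $g$ factors through $f$, i.e.\ there is a covering $h\colon \ol{N}\to \what{N}$ with $f\circ h=g$. This is precisely the claimed statement, up to the final bookkeeping point below.

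Finally I would verify that $\ol{N}$ lies in the class of $3$-manifolds under consideration throughout the paper: as a covering space of the manifold $N$ it is a manifold; it is connected because it corresponds to a subgroup of $G$; it is compact, being a finite cover of a compact manifold; orientability and asphericity are inherited from $N$ (the universal cover is unchanged); and each boundary component of $\ol{N}$ maps as a covering onto a torus boundary component of $N$, hence is itself a torus. There is essentially no obstacle in this argument; the only thing warranting a moment's care is confirming that the resulting cover stays within the admissible class of $3$-manifolds, which, as just noted, is automatic.
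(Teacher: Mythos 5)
Your proof is correct and is the standard normal-core argument that the authors clearly have in mind: the paper states this lemma without proof, calling it elementary. Nothing to add.
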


We are now in a position to prove Theorem~\ref{mainthma}.

\begin{proof}[Proof of Theorem~\ref{mainthma}]
Let $N$ be an aspherical graph manifold and let $\eps>0$.

If $N$ is covered  by a torus bundle, then there exists a finite regular cover $\wti{N}$ with vanishing Thurston norm and with $b_1(\wti{N})\geq 1$. In particular there exists a finite regular cover $\wti{N}$ with $r(\wti{N})=1$.

If $N$ is Seifert fibered, then there exists a finite regular cover $\wti{N}$ that is an $S^1$-bundle over a surface $\Sigma$. (See~\cite[Section~4.3]{AF13} and~\cite{He87} for details.) Since $N$ is aspherical we know that $\Sigma$ is not a sphere.
The Thurston norm evidently vanishes if $\Sigma$ is a disk, or if it is an annulus, or if it is a torus, i.e.\ in these cases we have $r(\wti{N})=1$. Thus we can now suppose that $\chi(\Sigma)< 0$. 

If $\wti{N}$ is a non-trivial $S^1$-bundle  over $\Sigma$, then $\Sigma$ is closed and it follows from $\chi(\Sigma)<0$, that  $b_1(\wti{N})\geq 1$. Furthermore it is straightforward to see that all homology classes are represented by tori, thus $k(\wti{N})=b_1(\wti{N})$ and we see that $r(\wti{N})=1$.

On the other hand, if $\wti{N}$ is a trivial $S^1$-bundle over $\Sigma$, then $\wti{N}=S^1\times \Sigma$. In that case it is well-known that  $k(S^1\times \Sigma)=b_1(\Sigma)$. Since $\chi(\Sigma)<0$ there exists  a cover $S^1\times \ol{\Sigma}$ of $S^1\times \Sigma$ with $r(S^1\times \ol{\Sigma}) > 1 - \eps$. Furthermore, using Lemma~\ref{lem:regularcover} we can arrange that $S^1\times \ol{\Sigma}$ is in fact a regular cover of $N$.

For the remainder of the proof we can now assume that $N$ is neither covered by a torus bundle nor is it Seifert fibered.
It follows from  Proposition~\ref{prop:producttype}
and Lemmas~\ref{lem:cgoesup} and~\ref{lem:regularcover}
that there exists a finite regular cover $\what{N}$ of $N$
such that  $\what{N}$ is of product type and such such that for each JSJ-component $N_v$ of $\what{N}$ we have 
$c(N_v)>\frac{1}{\eps}$.
Now let $\ol{N}$ be a finite cover of $\what{N}$. As above, the JSJ-decomposition of $\ol{N}$ is induced by the JSJ-decomposition of $\what{N}$. It is thus again of product type.

We denote the JSJ-components of $\ol{N}$ by $\ol{N}_v=S^1\times \Sigma_v$, $v\in V$. It follows from Lemma~\ref{lem:cgoesup} and from the above that for each JSJ-component $\ol{N}_v$ we have $c(\ol{N}_v)>\frac{1}{\eps}$.
  For each $v$ we  denote by $f_v\in H_1(\ol{N};\Z)$ the element determined by the $S^1$-factor.

It follows from~\cite[Proposition~3.5]{EN85} and the standard calculation of the Thurston norm for products $S^1\times \Sigma$ that for any $\phi\in H^1(\ol{N};\R)$ the Thurston norm is given by
\[ x_{\ol{N}}(\phi)=\sum_{v\in V} |\phi(f_v)|\cdot \chi_-(\Sigma_v).\]
In particular, the Thurston norm vanishes if $\phi$ vanishes on all elements $f_v, v\in V$. We thus see that 
\[ k(\ol{N})\geq b_1(\ol{N})-|V|.\]
On the other hand, it follows from the Mayer--Vietoris sequence corresponding to the decomposition of $\ol{N}$ along the JSJ-tori into the JSJ-components that 
\[ b_1(\ol{N})\geq \sum_{v\in V} \dim_\R\big(\coker\{H_1(\partial \ol{N}_v;\R)\to H_1(\ol{N}_v;\R)\}\big)>\frac{1}{\eps}\cdot |V|.\]
Putting the last two inequalities together we see that 
\[ 1-r(\ol{N})\leq \frac{b_1(\ol{N})-k(\ol{N})}{b_1(\ol{N})}\leq \frac{|V|}{\frac{1}{\eps}|V|}=\eps.\]
\end{proof}

\section{The calculation of $\rho$ for non-graph manifolds}

The goal of this section is to prove the following theorem, which together with  Theorem~\ref{mainthma} implies Theorem~\ref{mainthm}, since the property of being aspherical and not being a graph manifold is preserved by going to finite covers. 

\begin{theorem}\label{mainthmb}
Let $N$ be an aspherical $3$-manifold with empty or toroidal boundary that is not a graph manifold. Then given any $\eps>0$,  there exists a finite subregular
cover $\ol{N}$ of $N$ such that $r(\ol{N})<\eps$. In particular $\rho(N)=0$.
\end{theorem}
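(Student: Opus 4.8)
The goal is to produce, for every $\eps>0$, a finite subregular cover $\ol N\to N$ with $r(\ol N)<\eps$; equivalently, $k(\ol N)<\eps\cdot b_1(\ol N)$. My plan is to exploit that $N$ has a hyperbolic JSJ-piece, so that a large part of $\ol N$ will carry an honest Thurston norm, and that the virtual fibering / virtual specialness machinery (Agol \cite{Ag08,Ag13}, Przytycki--Wise \cite{PW12}, Wise \cite{Wi09,Wi12a,Wi12b}) lets us pass to covers where the geometry is rigid enough to control the kernel of $x$. The reason subregular (rather than regular) covers are allowed is precisely so that we can first pass to a regular cover that is ``nice'' on the hyperbolic pieces and then, inside that cover, take further regular covers of the individual JSJ-pieces/chunks and glue — such composites are subregular but need not be regular over $N$.

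The key steps, in order, are as follows. First, I would use the structure theory of JSJ-decompositions under finite covers (Proposition~1.9.2 and Theorem~1.9.3 of \cite{AFW15}, as already used in Section~2) to arrange that the JSJ-decomposition of every cover we consider is the pull-back of that of $N$, so at least one JSJ-component upstairs is hyperbolic. Second, since $N$ is not a graph manifold and is aspherical, $\pi_1(N)$ is virtually special (by the combination of Przytycki--Wise and Wise's theorem on virtually special cube complexes for relatively hyperbolic groups); in particular $N$ has a finite regular cover $N_1$ which is virtually fibered in a strong sense and whose fundamental group is LERF. I would then invoke the fact that on a hyperbolic piece the Thurston norm is a genuine norm, so $k$ of a hyperbolic manifold is $0$; the danger to $k(\ol N)$ being large comes only from the Seifert pieces and the JSJ-tori. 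Third — and this is the heart — I would choose further covers so that the hyperbolic piece(s) become ``homologically dominant'': concretely, pass to a cover in which the first Betti number contributed by (and detected on) the hyperbolic JSJ-components grows much faster than the Betti number coming from the Seifert pieces, using residual finiteness of $\pi_1$ of hyperbolic pieces to inflate $b_1$ there (each degree-$p$ abelian cover of a hyperbolic piece with $b_1\ge 1$ roughly multiplies its $b_1$), while only taking a \emph{single} fixed cover to start on the graph-manifold part. A Mayer--Vietoris argument along the JSJ-tori then bounds $k(\ol N)$ by (the $b_1$ of the Seifert pieces) $+$ (the number and rank of JSJ-tori), a quantity that stays bounded, or grows slowly, compared to $b_1(\ol N)$, which is forced to be large by the hyperbolic contribution. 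Dividing gives $r(\ol N)=k(\ol N)/b_1(\ol N)\to 0$.

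More precisely, the Mayer--Vietoris estimate I have in mind is the analogue of the one in the proof of Theorem~\ref{mainthma}: writing $\ol N$ as the union of its JSJ-pieces $\ol N_v$ glued along JSJ-tori $T_e$, one gets both an upper bound $k(\ol N)\le \sum_{v\ \text{Seifert}} b_1(\ol N_v) + \sum_e b_1(T_e)$ (since a class vanishing on all Seifert pieces and all tori is represented by norm-minimizing surfaces that are tori or empty on each piece — using that hyperbolic pieces contribute nothing to the kernel) and a lower bound $b_1(\ol N)\ge \sum_{v\ \text{hyperbolic}} c(\ol N_v)$ where $c$ is the cokernel dimension from Section~2. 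The point is then to make $\sum_{v\ \text{hyp}} c(\ol N_v)$ arbitrarily large relative to the bounded/slowly-growing numerator by taking abelian $p$-covers of the hyperbolic pieces (which exist with $b_1\ge 1$ and hence admit covers with $c$ as large as desired, by an argument parallel to Proposition~\ref{prop:producttype}), and to spread these consistently over the whole of $\ol N$ using LERF/virtual retraction so the local covers glue to a global cover; the gluing is where subregularity is essential.

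\textbf{Main obstacle.} The delicate point is the gluing/consistency step: local covers of the individual JSJ-pieces must match along the JSJ-tori to assemble into a cover of the whole manifold, and one must simultaneously keep the hyperbolic contribution to $b_1$ genuinely detected in $H_1$ of the total space (so that the Mayer--Vietoris lower bound is not killed by the gluing maps) while keeping the Seifert/torus contribution to $k$ under control. Resolving this requires the full strength of the virtually-special technology — in particular that the relevant groups are LERF and admit the right virtual retractions — to produce a single finite subregular cover realizing all the local covers at once; this is precisely the input that was unavailable before \cite{Ag13,PW12,Wi09,Wi12a,Wi12b} and is why the theorem sits where it does.
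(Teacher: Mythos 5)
Your overall framework (Mayer--Vietoris along the JSJ-tori, lower-bounding $b_1(\ol N)$ by the cokernel contributions $c$ of the hyperbolic pieces, upper-bounding $k(\ol N)$ by what the non-hyperbolic part and the tori can contribute) matches the paper's, but there is a genuine gap at the heart of your argument: you have no mechanism that keeps the numerator under control while the denominator grows. You propose to inflate $b_1$ of the hyperbolic pieces by taking abelian $p$-covers of those pieces ``while only taking a single fixed cover to start on the graph-manifold part,'' but any finite cover of $\ol N$ restricts to finite covers of the Seifert pieces as well, and for product pieces $S^1\times\Sigma$ with $\chi(\Sigma)<0$ the first Betti number of a degree-$d$ cover grows linearly in $d$ --- i.e.\ at the same rate as the hyperbolic contribution. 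So the quantity you need to be ``bounded or slowly growing'' ($\sum_{v\,\mathrm{Seifert}}b_1(\ol N_v)+\sum_e b_1(T_e)$) is not, and the ratio $k/b_1$ need not tend to $0$. Moreover, the gluing/consistency step that you flag as the main obstacle is exactly the part you leave unresolved; invoking LERF and virtual retractions is not by itself an argument that the local covers assemble.

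The paper's resolution is a single idea you are missing: construct (after passing to a suitable finite subregular cover, Lemmas~\ref{lem:mostlyfibered} and~\ref{lem:factorsthroughf2b}) one global class $\phi\in H^1(N;\Z)$ that is \emph{fibered} on the complement $M$ of certain hyperbolic JSJ-pieces but \emph{large} (factoring through a non-cyclic free quotient) on each of those hyperbolic pieces, and then take the finite \emph{cyclic} covers $N_{\phi_p}$ for large primes $p$. No gluing is needed --- the cover is induced by a single homomorphism $\pi_1(N)\to\Z_p$ --- and the two properties of $\phi$ do exactly the two jobs: Lemma~\ref{lem:b1bounded} shows that cyclic covers dual to a fibered class have $b_1$ bounded by $3+x_M(\phi|_M)$ independently of $p$ (this is what tames the Seifert/torus contribution), while Lemma~\ref{lem:f2largec} shows that on the pieces where $\phi$ is large, $c$ of the induced cover grows like $p$. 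Producing such a $\phi$ is itself nontrivial (Lemma~\ref{lem:pq} on the Thurston norm ball to find a non-fibered combination on the hyperbolic piece, then Lemma~\ref{lem:factorsthroughf2} via surface subgroup separability to promote non-fibered to large, with Lemma~\ref{lem:efficient} to spread covers of one JSJ-piece to the whole manifold); this, rather than a gluing of local covers, is where the virtually-special technology enters. Without this dichotomy between fibered and large restrictions of a single class, your estimate does not close.
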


We introduce the following definitions:
\bn
\item Let $N$ be a  3-manifold. An integral class $\phi \in H^1(N;\Z)=\hom(\pi_1(N),\Z)$ is called \emph{fibered}  if  there exists a fibration $p\colon N\to S^1$ with
$\phi=p_*\colon \pi_1(N)\to \Z$.  We say \emph{$N$ is fibered} if $N$ admits a fibered class.
\item We say that a homomorphism $\phi\colon \pi \to \Z$ is \emph{large} if $\phi$ is non-trivial and if it factors through an epimorphism from $\pi$ onto a non-cyclic free group.
\en
In the following proofs we will several times make use of the followings facts:
\bn
\item[(A)] If $p\colon \wti{M}\to M$ is a finite cover and $\phi\in H^1(M;\Z)$ is a fibered class, then $p^*\phi\in H^1(\wti{M};\Z)$ is also fibered. In particular, if $M$ is fibered, then $\wti{M}$ is also fibered.
\item[(B)] If $p\colon \wti{M}\to M$ is a finite cover and $\phi\colon \pi_1(M)\to \Z$ is large, then the composition $\phi\circ p_*\colon \pi_1(\wti{M})\to \Z$ is also large.
\en
Here the first statement is obvious and the second statement follows from the fact that any finite-index subgroup of a non-cyclic free group is again a non-cyclic free group.

One key ingredient in the proof of Theorem~\ref{mainthmb} is the Virtual Fibering theorem for non-graph manifolds that is due to Agol~\cite{Ag08,Ag13}, Przytycki--Wise~\cite{PW12} and Wise~\cite{Wi09,Wi12a,Wi12b}. We refer to~\cite{AFW15} for precise references.
(See also \cite{CF17,GM17} and \cite{FK14} for alternative proofs.)

\begin{theorem} \label{thm:virtfib}\textbf{\emph{(Virtual Fibering Theorem)}}
Any  aspherical $3$-manifold that is not a graph manifold admits a finite regular cover that is fibered.
\end{theorem}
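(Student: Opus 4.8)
The plan is to recognize this as the Virtual Fibering Theorem of Agol, Przytycki--Wise and Wise, and to organize its proof as a reduction to Agol's fibering criterion for $3$-manifolds with virtually RFRS fundamental group, followed by a verification of the RFRS hypothesis via cubulation. First note that an aspherical $3$-manifold is irreducible, so Agol's theorem \cite{Ag08} applies: if $M$ is a compact orientable irreducible $3$-manifold with empty or toroidal boundary and $\pi_1(M)$ is virtually RFRS, then $M$ has a finite cover $M'$ carrying a fibered class in $H^1(M';\Z)$ (this is proved by running a tower of RFRS covers and using half-lives--half-dies duality together with Stallings' fibration criterion to push a chosen class onto the cone on a fibered face of the Thurston norm ball). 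The theorem as stated asks for a \emph{regular} cover, but this costs nothing: given such an $M'$, Lemma~\ref{lem:regularcover} produces a finite regular cover $\ol{N}\to N$ factoring through $M'\to N$, and fact (A) shows that the pullback of the fibered class on $M'$ makes $\ol{N}$ fibered. So it suffices to produce one fibered finite cover and to show $\pi_1(N)$ is virtually RFRS.

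Next I would reduce ``virtually RFRS'' to ``virtually compact special'', i.e.\ to the statement that some finite-index subgroup of $\pi_1(N)$ acts freely and cocompactly on a CAT(0) cube complex whose quotient is a special cube complex in the sense of Haglund--Wise. Indeed, a compact special group embeds in a right-angled Artin group; right-angled Artin groups are RFRS by Agol \cite{Ag08}, and the RFRS property is inherited by subgroups, so a virtually compact special group is virtually RFRS. This implication is recorded in \cite{AFW15}. Thus the whole problem becomes: an aspherical $3$-manifold that is not a graph manifold has virtually compact special fundamental group.

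For this last point I would split according to the JSJ type, which for an aspherical non-graph manifold is either hyperbolic or mixed (nontrivial JSJ with at least one hyperbolic piece). In the hyperbolic case one first cubulates $\pi_1(N)$ using enough quasiconvex surface subgroups, supplied by Kahn--Markovic in the closed case and by Bergeron--Wise together with the Thurston--Wise surface-group argument in general; then Wise's Quasiconvex Hierarchy Theorem \cite{Wi12a,Wi12b} handles the cusped case and Agol's theorem \cite{Ag13}, via Wise's Malnormal Special Quotient Theorem, handles the closed case, upgrading the cubulation to virtual compact specialness. In the mixed case this is precisely the main theorem of Przytycki--Wise \cite{PW12}: the fundamental group of a mixed $3$-manifold is virtually compact special, proved by cubulating with quasiconvex surface subgroups adapted to both the hyperbolic and the Seifert pieces and then applying the same Wise/Agol hierarchy machinery. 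Either way one lands at ``virtually compact special'', and the chain closes.

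The main obstacle is the deepest input, which the present paper is content to cite: the virtual specialness of hyperbolic and mixed $3$-manifold groups (Wise's hierarchy theorems, Agol's Malnormal Special Quotient Theorem, and the Przytycki--Wise gluing), together with Agol's RFRS fibering argument itself, whose delicate point is the inductive construction of the RFRS tower forcing a cohomology class into a fibered cone. At the level of detail of this paper the proof is exactly the reduction above, with the technical heart deferred to \cite{Ag08,Ag13,PW12,Wi09,Wi12a,Wi12b} and the survey \cite{AFW15}.
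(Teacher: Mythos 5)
Your proposal is correct and matches the paper's treatment: the paper does not prove Theorem~\ref{thm:virtfib} but cites exactly the Agol--Wise--Przytycki--Wise machinery (via \cite{Ag08,Ag13,PW12,Wi09,Wi12a,Wi12b} and \cite{AFW15}) that you outline, and your upgrade from a fibered finite cover to a fibered finite \emph{regular} cover via Lemma~\ref{lem:regularcover} and fact (A) is the same routine step the paper relies on.
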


Before we continue we want to clarify our language for the JSJ-decomposition.
Let $N$ be an aspherical 3-manifold.
\bn
\item We refer to the collection of the JSJ-tori together with the boundary tori as the \emph{characteristic tori of $N$}.
\item Given an aspherical 3-manifold $N$ with boundary tori $S_1,\dots,S_k$ and JSJ-tori $T_1,\dots,T_l$ we pick disjoint tubular neighborhoods $S_i\times [-1,0]$, $i=1,\dots,k$ and $T_i\times [-1,1]$, $i=1,\dots,l$ and we refer to the components of 
\[ \textstyle N\,\,\sm \,\,\bigcup\limits_{i=1}^k S_i\times (-\frac{1}{2},0] \,\,\sm\,\, \bigcup\limits_{i=1}^l T_i\times (-\frac{1}{2},\frac{1}{2})\] as the JSJ-components of $N$. 
In particular, the complement of the union of the JSJ-components consists of tubular neighborhoods of all the characteristic tori.
\en

On two occasions we will make use of the following lemma.

\begin{lemma}\label{lem:efficient}
Let $N$ be a 3-manifold and let $N_v$ be a JSJ-component of $N$.
If $\wti{N}_v$ is a finite cover of $N_v$, then there exists a finite regular covering $p\colon N'\to N$ such that each component of $p^{-1}(N_v)$ is a finite covering of $\wti{N}_v$. 
\end{lemma}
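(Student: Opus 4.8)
The plan is to reduce the statement to a known separability property of the geometric subgroups of $3$-manifold groups and then read the conclusion off from covering-space theory. Set $G=\pi_1(N)$. Since the JSJ-components are $\pi_1$-injective, $G_v:=\pi_1(N_v)$ is a subgroup of $G$, and $H:=\pi_1(\wti{N}_v)$ is a subgroup of $G_v$ of some finite index $m$. I claim it suffices to find a finite-index \emph{normal} subgroup $K\trianglelefteq G$ with $K\cap G_v\le H$. Indeed, let $p\colon N'\to N$ be the regular covering associated with $K$. Since $p$ is regular, its deck group acts transitively on the components of $p^{-1}(N_v)$, so these components are mutually homeomorphic and each of them maps to $N_v$ with the same image subgroup, namely $G_v\cap K$ (one computes this for the component through a chosen lift of a basepoint in $N_v$, and normality of $K$ removes the conjugation ambiguity). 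If $G_v\cap K\le H=\pi_1(\wti{N}_v)$, the lifting criterion lifts each component of $p^{-1}(N_v)$ to a covering of $\wti{N}_v$, compatibly with the maps down to $N_v$, which is exactly what is asserted.

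To produce such a $K$, the key input is that a finite-index subgroup of the fundamental group of a JSJ-component of a $3$-manifold is separable in the fundamental group of the ambient manifold (see \cite{AFW15}); note this concerns only the \emph{geometric} subgroups, so it applies even when $N$ is a closed graph manifold, whose $\pi_1$ need not be subgroup separable. Granting this, $H$ is separable in $G$. Choose coset representatives $g_1=1,g_2,\dots,g_m$ of $H$ in $G_v$, and for each $i\ge 2$ choose a finite-index subgroup $K_i\le G$ with $H\le K_i$ and $g_i\notin K_i$. Then $K_0:=\bigcap_{i=2}^m K_i$ has finite index in $G$, contains $H$, and satisfies $K_0\cap G_v\le H$: if some $x\in K_0\cap G_v$ lay in $g_iH$ with $i\ge 2$, then from $x=g_ih$ with $h\in H\le K_0$ we would get $g_i=xh^{-1}\in K_0\le K_i$, a contradiction. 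Finally take $K=\bigcap_{g\in G}gK_0g^{-1}$, which is normal and still of finite index (being the normal core of the finite-index subgroup $K_0$), and which satisfies $K\cap G_v\le K_0\cap G_v\le H$. Applying the previous paragraph to this $K$ finishes the proof.

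The only genuinely hard ingredient is the separability statement in the second step; everything else is routine bookkeeping with coverings and finite-index subgroups. I would therefore take care to invoke it in exactly the generality needed — $N$ an arbitrary aspherical $3$-manifold with empty or toroidal boundary, $N_v$ any JSJ-component, $H\le\pi_1(N_v)$ any finite-index subgroup — rather than attempt to reprove it, since for graph-manifold pieces it rests on nontrivial separability results for graph-manifold groups, and for hyperbolic and mixed pieces on the work of Agol, Wise and Przytycki--Wise already used elsewhere in the paper.
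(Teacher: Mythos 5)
Your argument is correct and rests on the same essential input as the paper's proof: the paper simply cites Wilton--Zalesskii's efficiency of the JSJ decomposition (\cite[Theorem~A]{WZ10}, with \cite[(C.35)]{AFW15} for the bounded case), which is precisely the group-theoretic statement you invoke in the form of separability of finite-index subgroups of the vertex groups. The covering-space and normal-core bookkeeping you supply is a correct unpacking of what that citation delivers, so the only difference is that you make explicit what the paper delegates entirely to the references.
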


For closed 3-manifolds this is a result of  Wilton--Zalesskii~\cite[Theorem~A]{WZ10}. The case of 3-manifolds with non-trivial boundary can easily be reduced to the closed case (see e.g.\ \cite[(C.35)]{AFW15} for details).

We continue with the following lemma.

\begin{lemma}\label{lem:a}
Let $N$ be a $3$-manifold that is not a graph manifold. Then $N$ admits a finite regular cover $N'$ such that there exists a hyperbolic JSJ-component $N'_h$ with $c(N'_h)>0$. 
\end{lemma}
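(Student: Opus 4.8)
The plan is to produce the regular cover $N'$ in two stages: first arrange that $N$ has a hyperbolic JSJ-component, then push up that component to increase $c$.

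\textbf{Setting up a hyperbolic piece.} Since $N$ is not a graph manifold, its JSJ-decomposition contains at least one hyperbolic JSJ-component $N_h$. (If $N$ itself is hyperbolic there is nothing to fix at this stage; but in that closed/atoroidal case $c(N)$ is already positive when $b_1>0$, and even if $b_1(N)=0$ we can pass to a finite regular cover with $b_1>0$ by standard facts about hyperbolic 3-manifold groups being large, cf.\ the references in \cite{AFW15}. So the interesting case is a genuinely mixed manifold.) Thus I may assume $N$ has a hyperbolic JSJ-component $N_h$ whose boundary consists of characteristic tori of $N$.

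\textbf{Increasing $c$ on the hyperbolic piece.} The point is that a hyperbolic 3-manifold $N_h$ with toroidal boundary admits finite covers $\wti{N}_h$ with $c(\wti{N}_h)$ arbitrarily large: indeed $b_1$ grows linearly in the degree while the number of boundary tori — hence the rank of $H_1(\partial\wti{N}_h;\R)$ — can be controlled, or more simply one invokes that $\pi_1(N_h)$ is large (it is the fundamental group of a hyperbolic 3-manifold, hence virtually has a surjection onto a nonabelian free group), so there is a finite cover $\wti{N}_h$ with $\coker\{H_1(\partial\wti{N}_h;\R)\to H_1(\wti{N}_h;\R)\}$ nonzero; in particular $c(\wti{N}_h)>0$. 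Now apply Lemma~\ref{lem:efficient} to the JSJ-component $N_h$ of $N$ and the cover $\wti{N}_h\to N_h$: this yields a finite regular covering $p\colon N'\to N$ such that every component of $p^{-1}(N_h)$ is a finite cover of $\wti{N}_h$. By Proposition~1.9.2 and Theorem~1.9.3 of \cite{AFW15}, the JSJ-decomposition of $N'$ is the pull-back of that of $N$, so each such component is itself a JSJ-component of $N'$, and it is hyperbolic (being a finite cover of the hyperbolic manifold $N_h$). Pick one such component and call it $N'_h$. By Lemma~\ref{lem:cgoesup}, $c(N'_h)\geq c(\wti{N}_h)>0$, which is exactly what we want.

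\textbf{Main obstacle.} The one genuinely delicate point is producing, for a fixed hyperbolic $N_h$, a finite cover with $c>0$ — i.e.\ ensuring that the cokernel of $H_1(\partial)\to H_1$ becomes nonzero in some cover. This is where largeness of hyperbolic 3-manifold groups (a consequence of the work of Agol and Wise invoked throughout the paper) or, more hands-on, a Euler-characteristic count analogous to the one in the proof of Proposition~\ref{prop:producttype} is needed: in an abelian $\fp$-cover of degree $d$ one has $b_1(\wti{N}_h)-1 \approx d(b_1(N_h)-1)$ while the number of boundary tori grows only like $d/p$ times the original number, so for $d$ large relative to $p$ the cokernel rank is forced to be positive. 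I would present the argument via largeness to keep it short, since that machinery is already in force; the bookkeeping with Lemma~\ref{lem:efficient} and the pull-back description of the JSJ-decomposition is then routine.
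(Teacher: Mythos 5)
Your proof is correct and follows essentially the same route as the paper: pick a hyperbolic JSJ-component, use largeness of its fundamental group (Agol--Wise, cf.\ \cite[(C.17)]{AFW15}) to get a finite cover with $c>0$, then combine Lemma~\ref{lem:efficient} with Lemma~\ref{lem:cgoesup}. The only difference is that you spell out the pull-back description of the JSJ-decomposition and the hyperbolic/closed edge case, which the paper leaves implicit.
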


\begin{proof}
Let $N_h$ be a a hyperbolic JSJ-component of $N$. It follows from the work of Agol \cite{Ag13} and Wise \cite{Wi09,Wi12a,Wi12b} (see also \cite[Flowchart~4]{AFW15} for details) that $\pi_1(N_h)$ is large, i.e.\ $\pi_1(N_h)$ admits a finite index subgroup that surjects onto a non-cyclic free subgroup. This implies, see e.g.\ \cite[(C.17)]{AFW15}, that $N_h$ admits a finite-index cover $\wti{N}_h$ with  $c(\wti{N}_h)>0$. Thus the lemma is an immediate consequence of Lemmas~\ref{lem:cgoesup} and~\ref{lem:efficient}.
\end{proof}

We also have the following lemma which might be of independent interest.

\begin{lemma}\label{lem:factorsthroughf2}
Let $N$ be a $3$-manifold and let $\phi\in H^1(N;\Z)$ be a non-trivial non-fibered class. Then there exists a finite  regular covering $p\colon N'\to N$ such that the composition $\phi\circ p_*\colon \pi_1(N')\to \Z$ is large.
\end{lemma}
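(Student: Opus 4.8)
The plan is to read off, from the non-fiberedness of $\phi$, an HNN-extension structure on $\pi_1(N)$ in which $\phi$ has a proper vertex group, and then to manufacture a free quotient in a finite cover. First replace $\phi$ by the associated primitive class (this affects neither fiberedness nor largeness), and fix a connected, incompressible, Thurston-norm-minimizing surface $\Sigma$ dual to $\phi$. Cutting $N$ along $\Sigma$ gives a compact aspherical $3$-manifold $M$ with two distinguished boundary copies $\Sigma_+,\Sigma_-$ of $\Sigma$, and exhibits $\pi_1(N)$ as the HNN extension of $\pi_1(M)$ along the two $\pi_1$-injective inclusions $\iota_\pm\colon\pi_1(\Sigma)\hookrightarrow\pi_1(M)$, with $\phi$ sending the stable letter to $1$ and $\pi_1(M)$ to $0$. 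By Stallings' fibration theorem, since $\phi$ is non-fibered and $\Sigma$ is norm-minimizing, $M$ is not an interval bundle over a surface; in particular each peripheral subgroup $\pi_1(\Sigma_\pm)$ sits in $\pi_1(M)$ with infinite index, and $\pi_1(M)$ is neither free nor a surface group.

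The crux is then to find a finite regular cover $p\colon N'\to N$ such that some component $M'$ of the manifold obtained by cutting $N'$ along $p^{-1}(\Sigma)$ admits an epimorphism $\alpha\colon\pi_1(M')\onto F$ onto a non-cyclic free group $F$ killing the fundamental groups of all cut-surface pieces incident to $M'$. To build $\alpha$ I would use that $\pi_1(M)$ virtually surjects onto a non-cyclic free group — since $M$ is aspherical, has nonempty incompressible boundary, and is not an interval bundle, this comes from its JSJ pieces: via Agol--Wise for hyperbolic pieces (as in Lemma~\ref{lem:a}) and via the hyperbolic base $2$-orbifold for Seifert pieces — together with separability of the peripheral subgroups $\pi_1(\Sigma_\pm)$ in $\pi_1(M)$, which allows one to pass to a further finite cover killing the relevant boundary surfaces without destroying the free quotient. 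Lemmas~\ref{lem:efficient} and~\ref{lem:regularcover} then upgrade the resulting finite-index data to a finite regular cover $N'$ of $N$.

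Granting such a $p$, the conclusion is formal: the decomposition of $\pi_1(N')$ as the fundamental group of the graph of groups dual to $p^{-1}(\Sigma)$ surjects onto the graph of groups obtained by replacing the vertex group $\pi_1(M')$ by $F$ (via $\alpha$) and collapsing all other vertex groups and all edge groups to the trivial group; the target is the fundamental group of a graph of free groups with trivial edge groups, hence a free group $\widetilde F$, non-cyclic because it surjects onto $F$. Since $\phi\circ p_*$ vanishes on every vertex group of this decomposition, it is determined by its values on the stable letters, which survive in $\widetilde F$; hence $\phi\circ p_*$ factors through $\pi_1(N')\onto\widetilde F$ and is large.

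The main obstacle is the middle step — producing, inside a single finite regular cover of $N$, a non-cyclic free quotient of a cut-open vertex group that simultaneously kills the incident cut surfaces. This is where the cubulation and separability technology of Agol, Wise and Przytycki--Wise (and its Seifert and graph-manifold analogues) is genuinely needed; the graph-manifold case, where subgroup separability is subtler, requires extra care. Everything else reduces to Stallings' theorem and bookkeeping with graphs of groups.
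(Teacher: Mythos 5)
Your overall architecture (cut along a norm-minimizing surface dual to $\phi$, note that $\phi$ kills the vertex groups of the dual graph of groups, then enlarge that graph in a finite cover) matches the paper's, but your middle step --- the one you yourself flag as the crux --- has a genuine gap, and the tools you propose for it do not fill it. You want a finite cover in which some cut-open piece $M'$ admits an epimorphism onto a non-cyclic free group $F$ that \emph{kills the incident copies of the cut surface}. Largeness of $\pi_1(M)$ (via Agol--Wise on its JSJ pieces) gives you \emph{some} virtual free quotient, but there is no reason that quotient should vanish on the peripheral surface subgroups $\pi_1(\Sigma_\pm)$, and separability of those subgroups does not help you kill them: separability produces finite quotients separating elements from a subgroup, not infinite free quotients annihilating an infinite surface group while staying non-cyclic. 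Killing a $\pi_1$-injective surface subgroup in a non-abelian free quotient is a much stronger demand than anything largeness plus LERF delivers, and you give no construction. (A secondary, fixable point: your reduction to a \emph{connected} $\Sigma$ needs the minimal-component argument of Deblois--Friedl--Vidussi; and if the dual graph already has negative Euler characteristic you are done with $N'=N$, a case worth separating out.)

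The paper avoids this demand entirely. It separates two cases according to $\chi$ of the dual graph $\Gamma(\Sigma)$: if $\chi(\Gamma(\Sigma))<0$ then $\pi_1(N)\to\pi_1(\Gamma(\Sigma))$ already exhibits $\phi$ as large, with no cover needed. If $\chi(\Gamma(\Sigma))=0$, then $\Sigma$ is connected, and the key input is separability of the \emph{surface subgroup} $\pi_1(\Sigma)$ in $\pi_1(N)$ (Przytycki--Wise), applied to an element $g$ of the cut-open piece not carried by $\Sigma$ (which exists by Hempel's Theorem~10.5 precisely because $\phi$ is non-fibered). The resulting Long--Reid-type construction gives an epimorphism of $\pi_1(N)$ onto an amalgam of \emph{finite} groups, which is virtually free of rank $\geq 2$, and $\phi$ factors through it; passing to the corresponding finite cover one lands back in the first case. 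In other words, one never needs a free quotient of a vertex group killing edge groups --- only a finite cover in which the dual graph of the preimage surface has first Betti number at least two. If you reroute your argument through surface-subgroup separability in this way, the rest of your graph-of-groups bookkeeping goes through.
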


The proof of the lemma is closely related to the proof of the main theorems of~\cite{FV08} and of \cite{DFV14} and to \cite[Proof~of~Theorem~3.2.4]{LR05}.

\begin{proof}
We start out with a simple observation. Let $\S$ be a surface (not necessarily connected) in a $3$-manifold dual to a class $\psi\in H^1(M;\Z)=\hom(\pi_1(M),\Z)$. We denote by $\Gamma(\Sigma)$ the graph whose vertices are precisely the components of $M$ cut along $\Sigma$ and whose edges are the components of $\Sigma$ with the obvious maps from the edges to the vertices. Then the map $\psi\colon \pi_1(M)\to \Z$ factors through the canonical epimorphism $\pi_1(M)\to \pi_1(\Gamma(\Sigma))$.

Now we turn to the proof of the lemma. 
It is clear that it suffices to prove the lemma for primitive classes.
We pick a Thurston norm minimizing surface $\S$ dual to $\phi$ that has the minimal number of components among all Thurston norm minimizing surfaces dual to $\phi$. In particular $\Sigma$ has no components that are separating. It follows easily that $\chi(\Gamma(\Sigma))\leq 0$. If $\chi(\Gamma(\S))<0$, then we are done by the above observation.

Now suppose that $\chi(\G(\S))=0$. Since $\phi$ is primitive and since $\Sigma$ has the minimal number of components it follows from the argument on \cite[p.~73]{DFV14} that $\Sigma$ is connected. By Przytycki--Wise~\cite[Theorem~1.1]{PW14} the subgroup $\pi_1(\Sigma)\subset \pi_1(M)$ is separable, i.e.\ given any $g\not\in \pi_1(\Sigma)$ there exists a homomorphism $\a\colon \pi_1(M)\to G$ onto a finite group such that $\a(g)\not\in \a(\pi_1(\Sigma))$. Since $\Sigma$ is not a fiber there exists by \cite[Theorem~10.5]{He76} a $g\in \pi_1(M\sm \Sigma\times (0,1))$ that does not come from $\pi_1(\Sigma\times \{0\})$. It now follows from a standard argument, see e.g.\ \cite[(C.15)]{AFW15} or \cite[Proof~of~Theorem~3.2.4]{LR05}, that applying subgroup separability to this $g$ allows to build an epimorphism of $\pi_1(M)$ onto a free product with amalgamation of finite groups.  The fact that the target group is virtually a free group of rank two  
gives the desired statement.
\end{proof}


\begin{lemma}\label{lem:pq}
Let $N$ be a hyperbolic $3$-manifold and let $\alpha,\beta\in H^1(N;\Z)$ be linearly independent. Then there exist $p,q\in \Z\sm \{0\}$ such that $p\alpha+q\beta$ is not fibered.
\end{lemma}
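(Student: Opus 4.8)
The plan is to exploit the fact that for a hyperbolic $3$-manifold the fibered classes form an open cone structure: by Thurston's fibration theorem the fibered classes are exactly the lattice points in the open cones over certain top-dimensional faces (the ``fibered faces'') of the Thurston norm ball, and there are only finitely many such faces. So the set $F\subset H^1(N;\R)$ of rays through fibered integral classes is a finite union of open rational cones $C_1,\dots,C_m$, each the cone over a (relatively) open top-dimensional face of the (compact, since $N$ is hyperbolic and hence atoroidal, so $x_N$ is a genuine norm) unit ball of $x_N$.

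First I would restrict attention to the $2$-plane $P=\R\alpha+\R\beta\subset H^1(N;\R)$, which is well-defined since $\alpha,\beta$ are linearly independent. The claim ``$p\alpha+q\beta$ is not fibered for some $p,q\in\Z\setminus\{0\}$'' amounts to finding a rational ray in $P$ not lying in any $C_i$ and not along $\R\alpha$ or $\R\beta$. Intersecting each cone $C_i$ with $P$ gives an open subset $C_i\cap P$ of the plane $P$; its closure is a convex cone in $P$ of dimension at most $2$. If for every $i$ the intersection $C_i\cap P$ is \emph{not} all of $P\setminus\{0\}$ — equivalently, no single fibered face contains $P$ in the cone over its interior — then $P\setminus\bigcup_i \overline{C_i\cap P}$ is a nonempty open subset of $P$ (a finite union of open sectors), hence contains a rational ray, and by perturbing slightly we may assume this ray is distinct from $\R\alpha$ and $\R\beta$; scaling to an integral primitive vector and then to a suitable multiple produces $p\alpha+q\beta$ with $p,q\neq 0$ that is not fibered. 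The one case to rule out is that some $C_i$ contains $P\setminus\{0\}$ entirely; but the cone $C_i$ over the interior of a norm-ball face is a proper convex subcone of an open half-space, so it cannot contain an entire $2$-plane through the origin (a $2$-plane meets every open half-space through $0$ in points on both ``sides''). Hence this degenerate case never occurs, and we are done.

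The main obstacle I anticipate is being careful about the \emph{boundaries} of the fibered cones and the coordinate rays: a priori one must ensure the rational ray we select avoids all the $\overline{C_i\cap P}$ (the closed cones), not just the open ones, and also avoids $\R\alpha$ and $\R\beta$. Since we are excising finitely many closed convex sectors whose union is not all of $P$, the complement is a nonempty \emph{open} set, so it contains rational points in abundance and the two exceptional lines $\R\alpha,\R\beta$ can be dodged; this is elementary but should be spelled out. A secondary point worth a sentence is the input that $N$ being hyperbolic forces $x_N$ to be a norm (so the unit ball is compact and the face structure behaves) and that only finitely many faces are fibered faces — both standard consequences of Thurston's work \cite{Th86}, already invoked in the paper's introduction.

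(An alternative, slightly softer argument: among the integral classes $\alpha+n\beta$, $n\in\Z$, at most finitely many lie in the cone over any given fibered face whose image in $P$ is a proper sector, and since no fibered cone contains all of $P$, only finitely many of the rays $\R(\alpha+n\beta)$ are fibered; discarding those and $n=0$ leaves infinitely many $n$ with $\alpha+n\beta$ non-fibered, giving $p=1$, $q=n$. I would likely present whichever of these two phrasings is shortest once the face-structure setup is in place.)
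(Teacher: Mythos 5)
There is a genuine gap at the central step. From ``no single $C_i$ contains $P\setminus\{0\}$'' you conclude that $P\setminus\bigcup_i \overline{C_i\cap P}$ is a nonempty \emph{open} set. Neither the implication nor the conclusion is valid: finitely many proper closed convex sectors can perfectly well cover a plane (the four closed quadrants cover $\R^2$), and this is exactly what can happen here. There are hyperbolic $3$-manifolds (already with $b_1=2$, e.g.\ certain fibered link exteriors) for which \emph{every} top-dimensional face of the Thurston norm ball is a fibered face; then the closed sectors $\overline{C_i\cap P}$ cover all of $P$, and the non-fibered classes in $P$ are confined to the finitely many rays through the vertices of the polygon $B\cap P$ --- a set with empty interior. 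So there is no open set of non-fibered directions in which to ``find rational points in abundance,'' and the argument does not go through. Your softer alternative fails for the same reason: the open sector spanned by $\alpha$ and $\beta$ contains the rays $\R_{>0}(\alpha+n\beta)$ for \emph{all} $n>0$, so a single fibered sector whose trace on $P$ is a proper sector can still absorb infinitely many (indeed all but finitely many) of the classes $\alpha+n\beta$.

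The missing idea --- and the route the paper takes --- is to aim at the \emph{vertices} of $B\cap P$ rather than at an open region. A vertex $w$ of the polygon $B\cap P$ cannot lie in the relative interior of a top-dimensional face $F$ of $B$: near such a point $\partial B$ lies in the supporting hyperplane of $F$, so $\partial(B\cap P)$ would be a straight segment through $w$, contradicting that $w$ is a vertex. Hence $w$ lies in a face of $B$ of codimension at least two, and by Thurston's theorem the ray through $w$ is not fibered. Thurston's rationality of the vertices of $B$ (hence of $B\cap P$) is what produces an integral non-fibered class of the form $p\alpha+q\beta$; one then only has to select a vertex off the two lines $\R\alpha$ and $\R\beta$ so that $p$ and $q$ are both nonzero --- a point you correctly flag as delicate, but which must be settled using the finitely many vertex rays, not an open complement. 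Your half-space observation is correct but only excludes a \emph{single} cone covering $P$; it cannot substitute for this vertex argument.
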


\begin{proof}
We say that a rational class $\phi\in H^1(N;\Q)$ is fibered if some integral multiple $n\phi\in H^1(N;\Z)$, $n\in \N$ is fibered. We denote by
\[ B:=\{\phi\in H^1(N;\Q)\,|\, x_N(\phi)\leq 1\}\]
the norm ball of the Thurston seminorm. Since $x_N$ is a seminorm the set $B$ is convex and non-degenerate, the latter meaning that it is not contained in a lower-dimensional subspace of $H^1(N;\Q)$.  By assumption $N$ is hyperbolic, this implies that the Thurston seminorm on $H^1(N;\Q)$ is in fact a norm, i.e.\ $B$ is compact. Thurston \cite{Th86} showed that $B$ is a polyhedron with rational vertices. Furthermore he showed that the set of fibered classes is given by the union of cones on certain  open top-dimensional faces of the polyhedron $B$.

Now we denote by $V$ the  subspace of $H^1(N;\Q)$ spanned by $\alpha$ and $\beta$. By assumption $V$ is 2-dimensional. The intersection $B\cap V$ is a compact polytope in $V$ with rational vertices. Since the polytope $B\cap V$ is compact and non-degenerate it has at least three vertices. By the aforementioned result of Thurston any class in the cone of any of the vertices is not fibered. Since $\a$ and $\b$ are linearly independent and since there are at least three vertices, and since the vertices are rational we can find \emph{non-zero} $p,q\in \Z\sm\{0\}$ such that $p\alpha+q\beta$ lies in the cone of one of the vertices, in particular it is not fibered. 
\end{proof}

In the following we will on several occasions make use of the following lemma which is a straightforward consequence of  Proposition~1.9.2 and Theorem~1.9.3 in \cite{AFW15}.

\begin{lemma}\label{lem:hyperbolicjsj}
Let $N$ be a prime $3$-manifold and let $N_h$ be a hyperbolic JSJ-component of $N$. Then for each finite cover $p\colon N'\to N$ all the components of $p^{-1}(N_h)$ are hyperbolic JSJ-components of $N'$.
\end{lemma}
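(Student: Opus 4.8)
The plan is to deduce this directly from the compatibility of the JSJ-decomposition with finite coverings, together with the invariance of hyperbolicity under passage to finite covers; indeed the lemma is essentially a restatement of Theorem~1.9.3 of~\cite{AFW15} in the language used in this paper.

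First I would recall the content of Theorem~1.9.3 (and Proposition~1.9.2) of~\cite{AFW15}. Since $N$ is prime and admits a hyperbolic JSJ-component it is in particular irreducible and aspherical, so these results apply: for the finite cover $p\co N'\to N$ the preimage under $p$ of the collection of characteristic tori of $N$ is isotopic to the collection of characteristic tori of $N'$. Consequently, after an isotopy, every JSJ-component of $N'$ is a connected component of $p^{-1}(N_v)$ for a unique JSJ-component $N_v$ of $N$, and conversely every component of each $p^{-1}(N_v)$ is a JSJ-component of $N'$. Applying this with $N_v=N_h$, we conclude that each component $\wti N_h$ of $p^{-1}(N_h)$ is a JSJ-component of $N'$, and that the restriction of $p$ to $\wti N_h$ is a finite covering onto $N_h$.

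It then remains to observe that $\wti N_h$ is hyperbolic. Since $N_h$ is a hyperbolic JSJ-component it carries a complete hyperbolic metric of finite volume; pulling this metric back along the finite covering $\wti N_h\to N_h$ equips $\wti N_h$ with a complete finite-volume hyperbolic metric, so $\wti N_h$ is hyperbolic. (Alternatively one may simply invoke Proposition~1.9.2 of~\cite{AFW15}, which records that a finite cover of a hyperbolic $3$-manifold is hyperbolic.) Hence every component of $p^{-1}(N_h)$ is a hyperbolic JSJ-component of $N'$, as claimed.

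The only point that requires care is the identification in the first step: one must know that the preimages of the JSJ-tori of $N$ are \emph{exactly} the JSJ-tori of $N'$, and not merely some essential-torus family that could a priori be coarser (if a JSJ-torus became inessential in the cover) or finer (if a covering piece acquired new JSJ-tori of its own). Both possibilities are ruled out precisely because hyperbolic and Seifert fibered geometries are mutually exclusive and each is inherited by finite covers, so that no JSJ-piece changes type and no characteristic torus is created or destroyed — this is the substance of Theorem~1.9.3 of~\cite{AFW15}, which I would quote rather than reprove.
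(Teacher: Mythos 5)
Your proposal is correct and matches the paper's treatment: the paper gives no separate proof but states the lemma as "a straightforward consequence of Proposition~1.9.2 and Theorem~1.9.3 in \cite{AFW15}," which is exactly the combination you invoke (pullback of the JSJ-decomposition under finite covers, plus inheritance of hyperbolicity by finite covers).
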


\begin{lemma}\label{lem:mostlyfibered}
Let $N$ be a mixed 3-manifold. Then there exists a finite regular cover $N'$ of $N$, a hyperbolic JSJ-component $N_h'$ and a class $\phi\in H^1(N';\Z)$ such the restriction of $\phi$ to $N_h'$ is non-fibered but such that the restriction of $\phi$ to $\ol{N\sm N_h'}$ is fibered. 
\end{lemma}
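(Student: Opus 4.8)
The plan is to leverage the Virtual Fibering Theorem together with the flexibility provided by Lemmas~\ref{lem:efficient} and~\ref{lem:pq}, building the desired class piece by piece over the JSJ-components of a suitable cover. Since $N$ is mixed it has at least one hyperbolic JSJ-component $N_h$ and at least one adjacent characteristic torus. First I would pass to a finite regular cover where things are under control: by Theorem~\ref{thm:virtfib} there is a finite regular cover that is fibered, and by fact~(A) any further finite cover is still fibered; so after replacing $N$ by such a cover we may assume $N$ itself admits a fibered class $\psi_0\in H^1(N;\Z)$. By Lemma~\ref{lem:hyperbolicjsj} the hyperbolic JSJ-components of $N$ remain hyperbolic JSJ-components in all finite covers, so the ``mixed'' hypothesis is preserved.

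The heart of the matter is to engineer, on some hyperbolic piece $N_h$, two cohomology classes that can be combined so as to be non-fibered on $N_h$ while remaining fibered on the complementary submanifold. Here is how I would arrange this. Restricting $\psi_0$ to a chosen hyperbolic JSJ-component $N_h$ gives a class $\psi_0|_{N_h}$ which, by standard facts on fibrations of 3-manifolds cut along JSJ-tori, is fibered on $N_h$. I would like a second class on $N_h$, independent from $\psi_0|_{N_h}$, that vanishes on the boundary tori of $N_h$ — so that it extends by zero to a class on $\ol{N\sm N_h}$ and hence does not disturb fiberedness there. The existence of such a class is exactly where one needs to increase $b_1$ of the hyperbolic piece relative to its boundary: by Lemma~\ref{lem:a} (applied to the hyperbolic piece, which is itself non-graph) combined with Lemma~\ref{lem:efficient}, one finds a finite regular cover $N'$ of $N$ in which some hyperbolic JSJ-component $N_h'$ has $c(N_h')>0$, i.e.\ there is a class $\beta\in H^1(N_h';\R)$ — and after scaling, in $H^1(N_h';\Z)$ — whose restriction to $\partial N_h'$ is zero. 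Pulling back $\psi_0$ gives a fibered class $\alpha:=\psi_0|_{N_h'}$ on $N_h'$, and by construction $\alpha,\beta$ are linearly independent (one restricts nontrivially to the boundary, the other doesn't).

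Now apply Lemma~\ref{lem:pq} to the hyperbolic manifold $N_h'$ and the linearly independent pair $\alpha,\beta$: there are nonzero integers $p,q$ with $p\alpha+q\beta$ not fibered on $N_h'$. Set $\phi_h:=p\alpha+q\beta\in H^1(N_h';\Z)$. Since $\beta$ vanishes on $\partial N_h'$, the restriction of $\phi_h$ to each characteristic torus of $N_h'$ equals $p$ times the restriction of $\alpha=\psi_0|_{N_h'}$ there. Meanwhile $p\psi_0\in H^1(N';\Z)$ is a global class whose restriction to $\ol{N'\sm N_h'}$ is fibered (by fact~(A), as a multiple of a fibered class) and which agrees with $p\alpha$ on $\partial N_h'$, hence agrees with $\phi_h$ on $\partial N_h'$. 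Therefore $\phi_h$ and $p\psi_0|_{\ol{N'\sm N_h'}}$ glue along the separating family of characteristic tori bounding $N_h'$ to give a well-defined class $\phi\in H^1(N';\Z)$ (using the Mayer--Vietoris sequence: the obstruction to gluing lives in $H^1$ of the tori and vanishes because the restrictions match). By construction $\phi|_{N_h'}=\phi_h$ is non-fibered while $\phi|_{\ol{N'\sm N_h'}}=p\psi_0|_{\ol{N'\sm N_h'}}$ is fibered, which is exactly the assertion.

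The main obstacle I anticipate is the gluing step and its bookkeeping: one must ensure that the class $\beta$ with $\beta|_{\partial N_h'}=0$ can be taken integral (clearing denominators), that $\alpha$ and $\beta$ are genuinely linearly independent over $\Q$ in $H^1(N_h';\Q)$ (this uses that $\psi_0$ restricts nontrivially to at least one characteristic torus of the hyperbolic piece — which should follow because $\psi_0$ is fibered on $N_h'$ and a fibration restricts to a fibration, hence a nonzero map, on each boundary torus), and that the Mayer--Vietoris gluing indeed produces a class whose two restrictions are literally $\phi_h$ and $p\psi_0|_{\ol{N'\sm N_h'}}$. A secondary point to verify is that ``fibered on $N_h'$'' and ``fibered on the complement'' are the correct local notions here; for this one should cite the standard fact (as in~\cite{AFW15}) that a class on a prime 3-manifold whose restriction to every JSJ-piece is fibered, compatibly on the tori, need not itself be fibered — which is precisely why the constructed $\phi$ will be the seed for the argument in the rest of the section. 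None of these steps is deep, but assembling them carefully with the right cover from Lemmas~\ref{lem:a},~\ref{lem:efficient} is where the care is needed.
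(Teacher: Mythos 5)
Your proposal is correct and follows essentially the same route as the paper: virtually fiber, pass to a cover with a hyperbolic piece $N_h'$ having $c(N_h')>0$, and use Lemma~\ref{lem:pq} on the fibered restriction together with a boundary-trivial class to destroy fiberedness only on $N_h'$. The only cosmetic difference is that you build $\phi_h$ locally and glue via Mayer--Vietoris, whereas the paper defines the boundary-trivial class globally from the start as the composition $H_1(N';\Z)\to H_1(N',\ol{N'\sm N_h'};\Z)\cong H_1(N_h',\partial N_h';\Z)\to\Z$, so that $p\phi+q\psi$ is already a class on all of $N'$ and no gluing is needed.
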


\begin{proof}
By Theorem~\ref{thm:virtfib}, Lemmas~\ref{lem:regularcover} and~\ref{lem:a} and Observation (A) there exists a finite regular cover $N'$ of $N$ that admits a  fibered class $\phi\in H^1(N';\Z)$ and that admits  a hyperbolic JSJ-component $N'_h$ with the property that  $c(N_h')>0$. This implies that there exists a non-trivial homomorphism  $\psi_h\colon H_1(N_h';\Z)\to \Z$ that is trivial on the image of any boundary component of $N_h'$.  In particular $\psi_h$ factors through $H_1(N',\partial N';\Z)$. 
We denote the resulting homomorphism
\[ H_1(N';\Z)\to H_1(N',\ol{N'\sm N_h'};\Z)\cong H_1(N_h',\partial N_h';\Z)\xrightarrow{\psi_h} \Z\]
by $\psi$. 

We denote by $\phi_h$ the restriction of $\phi$ to $N'_h$. 
The classes $\phi_h$ and $\psi_h$ in $H^1(N'_h;\Z)$ are linearly independent since the former, as a fibered class is non-trivial on each boundary component of $N'_h$ whereas the latter is by construction trivial on each boundary component. 
By Lemma~\ref{lem:pq} there exist  $p,q\in \Z\sm \{0\}$ such that $p\phi_h+q\psi_h$ is a non-fibered class in $H^1(N_h';\Z)$. 
On the other hand, the restriction of $p\phi_h+q\psi_h$ to $\ol{N\sm N_h'}$ equals the restriction of $p\phi_h$ to $\ol{N\sm N_h'}$. Since $p\ne 0$ this is a fibered class. 
\end{proof}

\begin{lemma}\label{lem:factorsthroughf2b}
Let $N$ be a mixed 3-manifold. Then there exists a finite subregular cover $N'$ of $N$, hyperbolic JSJ-components $N_1',\dots,N_k'$, $k\geq 1$ of $N'$, and a homomorphism $\phi\in \hom(H_1(N';\Z),\Z)=H^1(N';\Z)$ such that the restriction of $\phi$ to each $N_i'$ is large  but such that the restriction of $\phi$ to $\ol{N'\sm (N_1'\cup \dots \cup N_k')}$ is fibered. 
\end{lemma}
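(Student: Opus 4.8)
\textbf{Proof proposal for Lemma~\ref{lem:factorsthroughf2b}.}

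The plan is to bootstrap from Lemma~\ref{lem:mostlyfibered} by applying the covering-theoretic tools Lemma~\ref{lem:factorsthroughf2} and Lemma~\ref{lem:efficient}, taking care that the property ``fibered on the complement'' survives all the covers we pass to. First I would invoke Lemma~\ref{lem:mostlyfibered} to get a finite regular cover $N^{(1)}\to N$, a hyperbolic JSJ-component $N_h^{(1)}$, and a class $\phi_1\in H^1(N^{(1)};\Z)$ whose restriction to $N_h^{(1)}$ is non-fibered but whose restriction to $\ol{N^{(1)}\sm N_h^{(1)}}$ is fibered. Since $\phi_1|_{N_h^{(1)}}$ is non-trivial (it is non-fibered, and the zero class on an atoroidal piece is not something we need worry about since $c$-considerations force non-triviality — if it were trivial we could add a small fibered perturbation supported away from $N_h^{(1)}$, or simply note that the construction in Lemma~\ref{lem:mostlyfibered} produces $p\phi_h+q\psi_h$ with $p,q\ne 0$ hence non-trivial on $N_h^{(1)}$), we may apply Lemma~\ref{lem:factorsthroughf2} to the pair $(N_h^{(1)},\phi_1|_{N_h^{(1)}})$: there is a finite regular cover of $N_h^{(1)}$ on which the pulled-back class becomes large.

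Next I would promote this cover of the single JSJ-piece to a cover of the whole manifold using Lemma~\ref{lem:efficient}: there exists a finite regular covering $p\colon N'\to N^{(1)}$ such that every component of $p^{-1}(N_h^{(1)})$ covers the chosen cover of $N_h^{(1)}$, hence $p^*\phi_1$ restricted to each such component is large (using Observation (B), since a large class pulls back to a large class under finite covers). Call these components $N_1',\dots,N_k'$; by Lemma~\ref{lem:hyperbolicjsj} they are genuinely hyperbolic JSJ-components of $N'$, and $k\geq 1$. The composition $N'\to N^{(1)}\to N$ is a composition of regular coverings, hence subregular, and I set $\phi:=p^*\phi_1$. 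It remains to check the restriction of $\phi$ to $\ol{N'\sm(N_1'\cup\dots\cup N_k')}$ is fibered. This is the point that needs the most care.

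For the fiberedness of the complement: the complement $\ol{N'\sm(N_1'\cup\dots\cup N_k')}$ is, up to the tubular-neighborhood bookkeeping fixed in our JSJ conventions, the preimage under $p$ of $\ol{N^{(1)}\sm N_h^{(1)}}$, since $p^{-1}(N_h^{(1)})=N_1'\cup\dots\cup N_k'$. Each connected component of this preimage is a finite cover of $\ol{N^{(1)}\sm N_h^{(1)}}$, and the restriction of $\phi=p^*\phi_1$ to such a component is the pullback of $\phi_1|_{\ol{N^{(1)}\sm N_h^{(1)}}}$, which is fibered by Lemma~\ref{lem:mostlyfibered}. By Observation (A), a pullback of a fibered class under a finite cover is fibered; hence $\phi$ restricts to a fibered class on every component of the complement, which is exactly what ``fibered on $\ol{N'\sm(N_1'\cup\dots\cup N_k')}$'' means.

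\textbf{Main obstacle.} The step I expect to require the most attention is not any single deep input but the compatibility bookkeeping: making sure that passing from the pair $(N_h^{(1)},\phi_1|_{N_h^{(1)}})$ to its ``large'' cover and then re-globalizing via Lemma~\ref{lem:efficient} does not disturb the fiberedness on the complementary pieces, and that the resulting cover is honestly subregular rather than merely a finite cover. The first is handled by the observation that $p$ is a cover of $N^{(1)}$, so restrictions of $p^*\phi_1$ to components of $p^{-1}(\ol{N^{(1)}\sm N_h^{(1)}})$ are pullbacks of a fibered class and hence fibered by (A); the second is immediate since Lemma~\ref{lem:efficient} delivers a \emph{regular} cover $N'\to N^{(1)}$ and $N^{(1)}\to N$ is regular, so their composition is a composition of regular covers, i.e.\ subregular. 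One small technical point to dispatch is the non-triviality of $\phi_1|_{N_h^{(1)}}$ needed to apply Lemma~\ref{lem:factorsthroughf2}, but as noted this is automatic from the $p,q\ne 0$ construction in Lemma~\ref{lem:mostlyfibered}.
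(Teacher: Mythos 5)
Your proposal is correct and follows essentially the same route as the paper: apply Lemma~\ref{lem:mostlyfibered}, then combine Lemma~\ref{lem:factorsthroughf2} on the hyperbolic piece with Lemma~\ref{lem:efficient} and Observation~(B) to get largeness on the preimage components, and use Observation~(A) for fiberedness on the complement. Your extra check that $p\phi_h+q\psi_h$ is non-trivial (since $p,q\ne 0$ and the two classes are linearly independent) is a correct and worthwhile detail that the paper leaves implicit.
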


\begin{proof}
In light of Lemma~\ref{lem:mostlyfibered} we can without loss of generality assume that there exists a hyperbolic JSJ-component $N_h$ of $N$ and a class $\phi\in H^1(N';\Z)$ such the restriction of $\phi$ to $N_h$ is non-fibered but such that the restriction of $\phi$ to $\ol{N\sm N_h}$ is fibered. 

By Lemmas~\ref{lem:factorsthroughf2} and~\ref{lem:efficient} and Observation~(B) there exists a finite regular cover $p\colon N'\to N$ such that for one (and hence all) components $N_1',\dots,N_k'$ of $p^{-1}(N_h)$ the map $p\circ \phi\colon \pi_1(N_i')\to \pi_1(N_h)\to \Z$ factors through an epimorphism onto a non-cyclic free group. 

On the other hand it follows from Observation~(A)  that the restriction of $p^*\phi$ to $\ol{N'\sm (N_1'\cup \dots \cup N_k')}=p^{-1}(\ol{N\sm N_h})$ is  fibered.
\end{proof}

Let $N$ be a 3-manifold. We have the following notations:
\bn
\item Given  $\phi\in H^1(N;\Z)=\hom(\pi_1(N),\Z)$ and $n\in \N$ we  denote by $\phi_n\colon \pi_1(N)\to \Z_n$ the  homomorphism that is given by the composition of $\phi$ with the projection map $\Z\to \Z_n$. 
\item Given a homomorphism $\a\colon \pi_1(N)\to G$ we denote by $N_\a$ the corresponding cover. If $\a$ is not surjective, then $N_\a$ consists of $|\coker(\a)|$ copies of the finite cover of $N$ corresponding to $\ker(\a)$. 
\en

We recall the following well-known lemma.

\begin{lemma}\label{lem:b1bounded}
Let $N$ be a 3-manifold and let $\phi\in H^1(N;\Z)=\hom(\pi_1(N),\Z)$ be a fibered class. Then for all but finitely many primes $p$ we have
\[ b_1(N_{\phi_p})\leq 3+x_N(\phi).\]
\end{lemma}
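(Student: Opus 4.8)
The plan is to use the fact that a fibered class $\phi \in H^1(N;\Z)$ is represented by a fiber bundle $p\colon N \to S^1$ with fiber a surface $F$, where $\chi_-(F) = x_N(\phi)$, and to analyze how the cover $N_{\phi_p}$ decomposes. First I would observe that the cover $N_{\phi_p}$ corresponding to $\phi_p\colon \pi_1(N)\to \Z_p$ is itself a surface bundle over $S^1$: it is the pullback of $N\to S^1$ along the degree-$p$ self-cover $S^1\to S^1$. Concretely, if $\psi\colon F\to F$ is the monodromy of the original bundle, then $N_{\phi_p}$ is the mapping torus of $\psi^p\colon F\to F$, so the fiber is the \emph{same} surface $F$ while the monodromy is replaced by its $p$-th power. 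This is the key structural point: the fiber does not change.

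From this I would compute $b_1(N_{\phi_p})$ via the Wang sequence of the fibration $F \hookrightarrow N_{\phi_p} \to S^1$:
\[
0 \to \coker\big(\psi^p_* - \id\,\colon H_1(F;\Q)\to H_1(F;\Q)\big) \to H_1(N_{\phi_p};\Q) \to \Z \to 0,
\]
so that $b_1(N_{\phi_p}) = 1 + \dim_\Q \coker(\psi^p_* - \id \mid H_1(F;\Q))$. The task then reduces to bounding $\dim_\Q \coker(\psi^p_* - \id)$ on $H_1(F;\Q)$, which is the same as the dimension of the generalized $1$-eigenspace issue: $\dim\coker(\psi^p_*-\id)$ equals the number of eigenvalues of $\psi_*$ (counted with the appropriate multiplicity coming from Jordan blocks) that are $p$-th roots of unity. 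Since $\psi_*$ acts on the finite-dimensional space $H_1(F;\Q)$, it has only finitely many eigenvalues, and among the roots of unity occurring as eigenvalues, all but finitely many primes $p$ fail to have any eigenvalue be a nontrivial $p$-th root of unity (a primitive $m$-th root of unity is a $p$-th root of unity only if $m \mid p$, and for $m>1$ this happens for at most one prime $p$; $m=1$ contributes eigenvalue $1$ regardless of $p$). Hence for all but finitely many $p$, $\dim\coker(\psi^p_*-\id) = \dim\coker(\psi_*-\id) = \dim_\Q(H_1(F;\Q)) - \rank(\psi_*-\id) \le \dim_\Q H_1(F;\Q) = b_1(F)$.

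Finally I would convert $b_1(F)$ into the stated bound. If $F$ is connected with $b$ boundary components, then $b_1(F) = 2g + \max\{b-1,0\}$ where $g$ is the genus, and $\chi_-(F) = \max\{2g-2+b, 0\}$ (using that a fiber of a fibration of an aspherical $3$-manifold is not a sphere, and excluding the degenerate disk/annulus cases, which if they occur make $\phi$ represented by a product $S^1\times D^2$ or $S^1\times S^1$ and can be handled directly). A short case analysis gives $b_1(F) \le \chi_-(F) + 2$ when $F$ is connected; allowing for the fiber $F$ to be disconnected (which can add at most the number of components, each bounded appropriately) and being a little generous yields $b_1(N_{\phi_p}) = 1 + b_1(F) \le 3 + x_N(\phi)$. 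The main obstacle I anticipate is not any single deep step but rather bookkeeping the low-complexity and disconnected-fiber edge cases cleanly so that the clean bound $b_1(F)\le x_N(\phi)+2$ holds in all cases; the eigenvalue/root-of-unity argument itself is routine once the mapping-torus-of-$\psi^p$ description is in hand.
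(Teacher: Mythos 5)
Your overall route is the same as the paper's: identify the $p$-fold cyclic cover with the mapping torus of the $p$-th power of the monodromy, read off $b_1$ from the Wang sequence (equivalently from $H_1$ of the semidirect product), bound the cokernel term by the first Betti number of the fiber, and convert that into $\chi_-+2$. Note that the eigenvalue/root-of-unity discussion is superfluous for your purposes: the bound you actually use, $\dim_\Q\coker(\psi^p_*-\id)\leq \dim_\Q H_1(F;\Q)$, holds for \emph{every} $p$ with no condition on the spectrum, since the cokernel is a quotient of $H_1(F;\Q)$.

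The genuine gap is the disconnected-fiber case, which you defer to the end with ``being a little generous'' and which is in fact the only reason any primes need to be excluded. A fibered class in the sense of this paper need not be primitive: if $\phi=d\psi$ with $\psi$ primitive and $d>1$, the fibration representing $\phi$ has fiber $F$ consisting of $d$ parallel copies of the connected fiber $S$ of $\psi$, cyclically permuted by the monodromy. Then $b_1(F)=d\cdot b_1(S)$ while $\chi_-(F)=d\cdot\chi_-(S)=x_N(\phi)$, so $b_1(F)$ can be as large as $x_N(\phi)+2d$, and the inequality $1+b_1(F)\leq 3+x_N(\phi)$ simply fails for $d>1$: allowing for disconnectedness costs $2d$, not a bounded constant. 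The lemma is still true, but to see it along your lines you must exploit the cyclic permutation of the components: for $p\nmid d$ the cokernel of $\psi^p_*-\id$ on $\bigoplus_i H_1(F_i;\Q)$ is isomorphic to the cokernel of $\rho_*^p-\id$ on $H_1(S;\Q)$ alone, where $\rho$ is the return map, giving $1+b_1(S)\leq 3+x_N(\psi)\leq 3+x_N(\phi)$. The paper sidesteps this computation by first writing $\phi=d\psi$ and observing that for $p$ coprime to $d$ the covers $N_{\phi_p}$ and $N_{\psi_p}$ coincide, so one can work with the connected fiber $S$ throughout; the finitely many excluded primes are exactly the divisors of $d$ (for $p\mid d$ the homomorphism $\phi_p$ is trivial and $N_{\phi_p}$ is $p$ disjoint copies of $N$, whose $b_1$ grows with $p$). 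As written, your argument proves the lemma only for primitive $\phi$ and misattributes the exceptional primes to the spectrum of the monodromy rather than to the divisibility of $\phi$.
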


\begin{proof}
Let $\phi$ be a fibered class. We write $\phi=d\psi$ where $\psi$ is a primitive class and $d\in \N$.
It is well-known that $\psi$ is again fibered with $x_N(\phi)=dx_N(\psi)$. We denote by $S$ the fiber of the surface bundle corresponding to $\psi$. Surthermore we denote by $\varphi\colon \pi_1(S)\to \pi_1(S)$ the corresponding monodromy. Also, given any automorphism $\gamma$ of $\pi_1(S)$ we denote by $\Z\ltimes_{\gamma}\pi_1(S)$ the corresponding semidirect product. 

Now let $n\in \N$. It is straightforward to see that 
\[\ba{rcl} H_1(N_{\psi_n};\Z)&\cong&H_1(n\Z\ltimes_{\varphi}\pi_1(S);\Z)\\
&\cong& H_1(\Z\ltimes_{\varphi^n}\pi_1(S);\Z)\cong \Z\oplus H_1(S;\Z)/(\varphi^n-\id).\ea\]
It follows that 
\[  b_1(N_{\psi_n})\leq 
\op{rank}_{\Z}(\Z\oplus H_1(S;\Z)/(\varphi^n-\id))\leq 1+b_1(S)\leq 3+x_N(\psi).\]

Now let $p$ be a prime that is coprime to $d$.
It follows that the map
\[ \pi_1(N)\xrightarrow{d\cdot \psi=\phi}d\Z\to \Z_p\]
is surjective. In particular $N_{\phi_p}=N_{\psi_p}$, and we thus see from the above that 
\[ b_1(N_{\phi_p})=b_1(N_{\psi_p})\leq 3+x_N(\psi)\leq 3+x_N(\phi).\]
\end{proof}

The following is the last lemma that we will need for the proof of Theorem~\ref{mainthmb}.

\begin{lemma}\label{lem:f2largec}
Let $N$ be a 3-manifold and let $\phi\colon \pi_1(N)\to \Z$ be a large  homomorphism such that the restriction of $\phi$ to all boundary-components of $N$ is non-trivial.
 Then for all but finitely many primes $p$ we have
\[ c(N_{\phi_p})\geq p-1-2b_0(\partial N).\]
\end{lemma}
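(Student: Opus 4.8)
The plan is to exploit the fact that $\phi$ being large gives an epimorphism $\pi_1(N)\onto F$ onto a non-cyclic free group through which $\phi$ factors. Pulling back the cover, the key point is that the $\Z_p$-cover $N_{\phi_p}$ receives a map to a cover of a wedge of circles (or of a free-group classifying space), and the first Betti number of the relevant cover of a wedge of two circles grows linearly in $p$. Concretely, write $\phi$ as $\pi_1(N)\xrightarrow{q} F\xrightarrow{\psi}\Z$ with $F$ free of rank $\geq 2$ and $q$ surjective. Then $\phi_p$ factors as $\pi_1(N)\xrightarrow{q}F\xrightarrow{\psi_p}\Z_p$, and since $q$ is onto, $\psi_p$ is onto, so $N_{\phi_p}$ is connected and $\pi_1(N_{\phi_p})=q^{-1}(\ker\psi_p)$, which surjects onto $\ker(\psi_p)\subset F$, a free group of rank $1+p(\rank F-1)\geq 1+p$ by Nielsen--Schreier. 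Hence $b_1(N_{\phi_p})\geq 1+p$ — but this bounds $b_1$, not $c$, so more care is needed for the cokernel statement.

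The real work is to control the contribution of the boundary. First I would show that $b_1(N_{\phi_p})\geq p-1$ (the crude bound above suffices, with room to spare). Next, since by hypothesis $\phi$ is non-trivial on every boundary torus of $N$, for all but finitely many $p$ the restriction $\phi_p$ is non-trivial on every boundary torus, so each boundary torus $T\subset\partial N$ has preimage in $N_{\phi_p}$ consisting of $|H_1(T;\Z)/\!\ker(\phi_p|_T)|$-to-one... more precisely the preimage of each boundary torus is a union of tori and the total rank of $H_1(\partial N_{\phi_p};\R)$ is controlled: each component $T$ of $\partial N$ with $\phi_p|_T\neq 0$ contributes a preimage which is a single torus (degree-$p$ cover of $T$ where the $S^1$-direction realising $\phi$ unwinds) or a disjoint union thereof, but in any case $b_1$ of the preimage of $T$ equals $2$ times the number of its components, and the number of components is at most... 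Here is the clean way: the image of $H_1(\partial N_{\phi_p};\R)\to H_1(N_{\phi_p};\R)$ has dimension at most $\dim H_1(\partial N_{\phi_p};\R)$, and I would bound the latter. Each torus $T_j\subset\partial N$ ($j=1,\dots,b_0(\partial N)$) on which $\phi_p\neq 0$ has connected preimage $\tilde T_j$ a torus, contributing $b_1(\tilde T_j)=2$; summing gives $\dim H_1(\partial N_{\phi_p};\R)\leq 2b_0(\partial N)$. Therefore
\[
c(N_{\phi_p})=\dim\coker\big(H_1(\partial N_{\phi_p};\R)\to H_1(N_{\phi_p};\R)\big)\geq b_1(N_{\phi_p})-2b_0(\partial N)\geq p-1-2b_0(\partial N),
\]
as claimed.

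The step I expect to be the main obstacle is verifying that, for all but finitely many $p$, the preimage of each boundary torus $T_j$ in $N_{\phi_p}$ is \emph{connected} (so that it is a single torus and contributes exactly $2$ to $b_1(\partial N_{\phi_p})$), or more robustly, that $b_1(\partial N_{\phi_p})\leq 2b_0(\partial N)$ without worrying about connectedness. The safe route is: the preimage of $T_j$ is the cover of $T_j$ corresponding to $\ker(\phi_p|_{T_j})\subset H_1(T_j;\Z)=\Z^2$; when $\phi_p|_{T_j}\neq 0$ this subgroup has index $p$ in $\Z^2$, so the cover is connected with first homology $\Z^2$ again (it is a torus), contributing $b_1=2$. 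The finitely-many-primes caveat is exactly to ensure $\phi_p|_{T_j}\neq 0$ for every $j$, which holds once $p$ does not divide any of the finitely many nonzero integers $\phi(\mu_j),\phi(\lambda_j)$ describing $\phi$ on a basis of each $H_1(T_j;\Z)$. Once this bookkeeping is in place the inequality follows immediately from the free-group Betti number growth, so the only genuine content is the interplay between "large" and Nielsen--Schreier, which is routine.
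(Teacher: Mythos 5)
Your argument is essentially the paper's proof: factor $\phi$ through the epimorphism $q$ onto a non-cyclic free group, apply Nielsen--Schreier to the index-$p$ kernel to get $b_1(N_{\phi_p})\geq p-1$, and observe that for $p$ coprime to the finitely many nonzero integers generating $\phi(\pi_1(T_j))$ each boundary torus has connected torus preimage, so $b_1(\partial N_{\phi_p})\leq 2b_0(\partial N)$ and the cokernel bound follows. The one slip is the assertion that $\psi_p$ is onto \emph{because} $q$ is onto --- surjectivity of $\phi_p$ is not automatic (and is needed both for connectedness of $N_{\phi_p}$ and for the index-$p$ rank count), but it holds for all but finitely many $p$, namely those not dividing the index of the image of $\phi$ in $\Z$, which is exactly the extra coprimality condition the paper imposes on $p$.
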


\begin{proof}
Let $N$ be a 3-manifold and let $\phi\colon \pi_1(N)\to \Z$ be a non-trivial homomorphism that factors through an epimorphism $\a\colon \pi_1(N)\to F$ onto a non-cyclic free group $F$ and such that the restriction of $\phi$ to all boundary-components of $N$ is non-trivial. By a slight abuse of notation we denote the induced homomorphism $F\to \Z$ by $\phi$ as well.

We denote the boundary components of $N$ by $T_1,\dots,T_k$. For each $i\in \{1,\dots,k\}$ we define $d_i\in \N$ by the condition that $\phi(\pi_1(T_i))=d_i\Z$. Similarly we define $d$ by $\phi(\pi_1(N))=d\Z$. By our hypothesis we know that $d$ and  all the  $d_i$ are non-zero.

Now let $p$ be any prime that is coprime to $d$ and to $d_1,\dots,d_k$. This choice of $p$ implies that the restriction of $\phi_p$ to each boundary component is surjective. Furthermore the homomorphism $\phi_p\colon F\to \Z_p$ is surjective. We deduce that 
\[ b_1(N_{\phi_p}) \,\,\geq \,\,\op{rank}(\ker(\phi_p\colon F\to \Z_p))\,\,\geq \,\,p-1.\]
Since the restriction of $\phi_p$ to each boundary component is surjective we see that the induced covering of each boundary component is connected. Put differently, $N_{\phi_p}$ has precisely $k$ boundary components, each of which is a torus. We conclude that 
\[ \ba{rcl}c(N_{\phi_p})&=&\op{rank}\left(\coker\{H_1(\partial N_{\phi_p};\Z)\to H_1(N_{\phi_p};\Z)\}\right)\\[0.05cm]
&
\geq &b_1(N_{\phi_p})-b_1(\partial N_{\phi_p})\,\,\geq\,\, p-1-2b_0(\partial N).\ea\]
\end{proof}

We are now in a position to prove Theorem~\ref{mainthmb}.

\begin{proof}[Proof of Theorem~\ref{mainthmb}]
Let $N$ be an aspherical $3$-manifold that is not a graph manifold. We need to show that  given any $\eps>0$,  there exists a finite subregular
 cover $\ol{N}$ of $N$ such that $r(\ol{N})<\eps$. 

So let $N$ be an aspherical 3-manifold that is not a graph manifold and let $\eps>0$.
If $N$ is hyperbolic then it follows from  Proposition~\ref{prop:detectshyperbolic} that already $r(N)=0$.
Thus henceforth we can restrict ourselves to the case that $N$ is not hyperbolic, i.e.\ $N$ is a mixed manifold. 

By Lemma~\ref{lem:factorsthroughf2b} we can without loss of generality assume that there exists $k\geq 1$ hyperbolic JSJ-components $N_1,\dots,N_k$ of $N$  and a homomorphism $\phi\in H^1(N;\Z) = \hom(H_1(N;\Z),\Z)$ such the restriction of $\phi$ to each $N_i$, $i=1,\dots,k$ is large  but such that the restriction of $\phi$ to $M:=\ol{N\sm (N_1\cup \dots \cup N_k)}$ is fibered. 

By our definition of JSJ-components we see that $M$ contains all characteristic tori of $N$. Since $\phi|_M$ is fibered it follows from \cite[Section~4]{EN85} that the restriction of $\phi$ to a tubular neighborhood of each characteristic torus is a fibered class. It follows in particular that the restriction of $\phi$ to each characteristic torus is non-zero. This in turn implies that for almost all primes $p$ the restriction of $\phi_p$ to each characteristic torus is an epimorphism. 

We write $C:=3+x_M(\phi|_M)$. We denote by $j$ the number of JSJ-tori of $N$ and we denote by $b$ the number of boundary tori of $N$. 
By the above and by Lemmas~\ref{lem:b1bounded}  and~\ref{lem:f2largec} there exists a prime $p$ such that the covering map 
$f\colon \ol{N}\to N$ corresponding to the homomorphism $\phi_p\colon \pi_1(N)\to \Z_p$ has the following properties:
\bn
\item The restriction of $\phi_p$ to each characteristic torus and to each JSJ-component is an epimorphism. In particular the preimages of the JSJ-tori and the JSJ-components under $f$ are connected.
\item For each $i\in \{1,\dots,k\}$ we have  $c(f^{-1}(N_i))>\frac{C+6j+2b}{k\eps}$.
\item We have $b_1(f^{-1}(M))\leq C$. 
\en 
We claim that $\ol{N}$ has the desired property. 

It follows from the Mayer--Vietoris sequence applied to the decomposition of $\ol{N}$ along the $j$ tori that are given by the preimages of the JSJ-tori of $N$ and from (3) that 
\[\sum_{i=1}^k c(f^{-1}(N_i))\,\,\leq\,\,  b_1(\ol{N})\leq C+2j+\sum_{i=1}^k b_1(f^{-1}(N_i)).\]
The union of the  $f^{-1}(N_i)$, $i=1,\dots,k$ has at most  $2j+b$ boundary tori.
It follows easily that 
\[ \sum_{i=1}^k b_1(f^{-1}(N_i))\,\,\leq \,\,4j+2b+\sum_{i=1}^k c(f^{-1}(N_i)).\]
Putting the above two inequalities together we obtain that 
\[\sum_{i=1}^k c(f^{-1}(N_i))\,\,\leq \,\, b_1(\ol{N})\,\,\leq \,\,C+6j+2b+\sum_{i=1}^k c(f^{-1}(N_i)).\]
On the other hand, it follows from the same Mayer--Vietoris sequence together with  the fact that the Thurston seminorm is in fact a norm on hyperbolic 3-manifolds that 
\[ k(\ol{N})\leq b_1(\ol{N})-\sum_{i=1}^kc(f^{-1}(N_i)).\]
The combination of the last two inequalities together with (2) shows that 
\[ r(\ol{N})\,\,=\,\,\frac{k(\ol{N})}{b_1(\ol{N})}\,\,\leq\,\, \frac{C+6j+2b}{\sum_{i=1}^k c(f^{-1}(N_i))}<\eps.\]

\end{proof}


\end{document}